\DeclareMathOperator{\curl}{curl}
\DeclareMathOperator{\divg}{div}
\theoremstyle{plain}
\newtheorem{Thm}{Theorem}
\newtheorem{Rem}[Thm]{Remark}
\newtheorem{Lem}[Thm]{Lemma}
\newcommand {\p}{\partial}
\newcommand{\q}{\quad}
\newcommand{\eq}{\begin{equation}}
\newcommand{\eeq}{\end{equation}}
\def\a{\alpha}
\def\curl{\text{\rm curl\,}}
\def\div{\text{\rm div\,}}
\def\d{\nabla}
\def\lam{\lambda}
\def\O{\Omega}
\def\p{\partial}
\def\q{\quad}
\def\loc{\text{\rm loc}}
\def\B{\bold B}
\def\D{\bold D}
\def\f{\bold f}
\def\F{\bold F}
\def\H{\bold H}
\def\R{\Bbb R}
\def\u{\bold u}
\def\bv{\bold v}
\def\w{\bold w}
\def\0{\bold 0}
\numberwithin{equation}{section}
\numberwithin{Thm}{section}
\begin{document}

\large

\author[]{Yong Zeng}
\author[]{Zhibing Zhang}

\address{Yong Zeng: School of Mathematics and Statistics, Chongqing Technology and Business University, Chongqing 400067, PR China; }
\email{yzeng@ctbu.edu.cn}

\address{Zhibing Zhang: School of Mathematics and Physics, Anhui University of Technology, Ma'anshan 243032, PR China; }
\email{zhibingzhang29@126.com}

\thanks{}

\title[Steady Hall-MHD]
{Existence, regularity and uniqueness of weak solutions with bounded magnetic fields to the steady Hall-MHD system}

\keywords{Hall-MHD system, Existence, Regularity, Uniqueness}

\subjclass[2010]{35J60; 35Q35; 35Q60}

\begin{abstract}
Under the condition of small external forces, we obtain existence of a weak solution of the steady Hall-MHD system with H\"{o}lder continuous magnetic
field. We also established regularity of weak solutions provided that magnetic fields are bounded. For sufficiently small external forces, uniqueness result is also established.
\end{abstract}

\maketitle

\section{Introduction}

\subsection{The steady Hall-MHD system}\

The three-dimensional resistive incompressible Hall Magnetohydrodynamics system (see for example \cite{Homann, Mininni, Ohsaki}), or, Hall-MHD system, for short, is described by the following equations:
\begin{equation*}
\begin{cases}
\u_t-\Delta \u+(\u\cdot\nabla)\u+\nabla p-\curl\B\times\B={\bf f},\\
\B_t+\curl(\curl\B+\mu \curl\B\times\B-\u\times\B)=\curl{\bf g},\\
\divg \u=\divg\B=0,\\
\end{cases}
\end{equation*}
where $\u=\u(x,t)$ is the fluid velocity, $\B=\B(x,t)$ is the magnetic field, and $p=p(x,t)$ is a scalar function which denotes the pressure.  The given vector fields $\f$ and $\curl\mathbf{g}$ are external forces on the magnetically charged fluid flows. The term
$$
\mu\,\curl(\curl\B\times\B)
$$
in the above system is called the {\it Hall term}, where $\mu>0$ is a parameter which measures the relative strength of the Hall effect.

In the special case where $\mu=0$,  the Hall-MHD system is reduced to the MHD system which has been used as a useful model in  Geophysics and Astrophysics. To our knowledge, the Hall-MHD system with $\mu>0$ was first introduced by Lighthill in 1960 in \cite{Lighthill}, where he firstly considered the {\it Hall current term}. Since then, Hall-MHD system has been successfully applied to the structuring of sub-Alfv\'enic plasma expansions \cite{Huba-1,Ripin}, and to rapid magnetic field transport in plasma opening switches \cite{Chu-E,Huba-2}.

The time-dependent Hall-MHD system have been extensively studied by many authors, see \cite{ADFL,Chae-Degond-Liu,Chae-Lee,Chae-Schonbek,Chae-2016,Dai,Wan-Zhou,Weng-2016-1, Weng-2016-2} and references therein. However, there have been less work on the steady Hall-MHD system (take $\mu=1$ for simplicity) on a three-dimensional bounded domain $\O$:
\begin{equation}\label{Hall-MHD}
\begin{cases}
-\Delta \u+(\u\cdot\nabla)\u+\nabla p-\curl\B\times\B={\bf f}\q &\text{in } \O,\\
\curl(\curl\B+\curl\B\times\B-\u\times\B)=\curl{\bf g}\q & \text{in } \O,\\
\divg \u=\divg\B=0 \q&\text{in } \O,\\
\u=\0,\q\B\times\nu=\0\q & \text{on }\p\O.
\end{cases}
\end{equation}
where $\nu$ is the unit outer normal to the boundary $\p\O$. In \cite{Chae-2015}, Chae and Wolf studied the partial regularity of suitable weak solutions of the steady Hall-MHD system~\eqref{Hall-MHD} and proved that $\B\in C(\O,\R^3)$ implies $\B\in C_{\loc}^{0,\alpha}(\O,\R^3)$. Recently, Zeng \cite{Zeng2017} has obtained existence of $H^1$ weak solutions to~\eqref{Hall-MHD} with external forces $(\f,\mathbf{g})\in H^{-1}(\O,\R^3)\times L^2(\O,\R^3)$ by using the Galerkin approximation method. He also proved the existence of $H^2$ solution with small external forces $(\f,\mathbf{g})\in L^2(\O,\R^3)\times H^1(\O,\R^3)$.

Due to the presence of the Hall term $\curl(\curl\B\times\B)$, it is difficult to obtain regularity for $H^1$ weak solutions to \eqref{Hall-MHD}. The Hall term contains most of the new difficulties compared with the MHD system. To analyze the Hall term explicitly, we set $\u=\0$ in the second equation of \eqref{Hall-MHD} and obtain the so-called Hall equations as follows:
\begin{equation}\label{eq-Hall}
\begin{cases}
\curl(\curl\B+\curl\B\times\B)=\curl{\bf g}\q & \text{in } \O,\\
\divg \B=0 \q&\text{in } \O,\\
\B\times\nu=\0\q&\text{on } \p\O.
\end{cases}
\end{equation}
There are significant structural similarities between Hall equations \eqref{eq-Hall} and the following quasilinear elliptic equation

\begin{equation}\label{eq-Dir}
\begin{cases}
-\divg[\nabla u+\nabla u\times \mathbf{a}(u)]=\divg \mathbf{g}\q & \text{in } \O,\\
\q u=0\q&\text{on } \p\O,
\end{cases}
\end{equation}
where the vector-valued function $\mathbf{a}(u)$ is continuous in $u$. Thanks to the special structure of \eqref{eq-Dir}, we have De Giorgi-Nash theory in hand to deal with the regularity of \eqref{eq-Dir}. If $\mathbf{g}\in L^q(\O,\mathbb{R}^3)$ for some $q>3$, then we have a priori estimate for \eqref{eq-Dir} on $C^{0,\alpha}$ norm of $u$. In fact, using De Giorgi iteration, we can get
$$\|u\|_{L^\infty(\O)}\leq C(\O,q)\|\mathbf{g}\|_{L^q(\O)}.$$
Hence $\mathbf{a}(u)\in L^\infty(\O,\R^3)$. By De Giorgi-Nash theory for linear elliptic equation, there exists $\alpha=\alpha(\O,q,\mathbf{a},\|\mathbf{g}\|_{L^q(\O)})\in (0,1)$ such that
$$\|u\|_{C^{0,\alpha}(\overline{\O})}\leq C(\O,q,\mathbf{a},\|\mathbf{g}\|_{L^q(\O)})\|\mathbf{g}\|_{L^q(\O)}.$$
With this a priori estimate, we can apply Schauder's fixed point theorem to obtain a H\"{o}lder continuous weak solution of \eqref{eq-Dir}.

Unfortunately, the De Giorgi-Nash type theorem does not hold in general for systems. Therefore, compared with \eqref{eq-Dir}, it is much more difficult to obtain regularity of \eqref{eq-Hall}. Our strategy in this paper is to transform \eqref{eq-Hall} into an elliptic equation \eqref{eq-fai0} and a $\divg$-$\curl$ system \eqref{eq-div-curl}. Then we handle each of them. We briefly describe the transformation process. Let $\O$ be simply-connected and $\B$ solve the Hall equations \eqref{eq-Hall}. Since
$$\curl(\curl\B+\curl\B\times\B-\mathbf{g})=\0,$$
there exists $\varphi$ such that
$$\curl\B+\curl\B\times\B-\mathbf{g}=\nabla\varphi,$$
where $\varphi$ satisfies the equation
\begin{equation*}
\begin{cases}
\Delta\varphi=\divg(\curl\B+\curl\B\times\B-\mathbf{g})\q & \text{in } \O,\\
\frac{\p\varphi}{\p\nu}=(\curl\B+\curl\B\times\B-\mathbf{g})\cdot\nu \q&\text{on } \p\O.
\end{cases}
\end{equation*}
Since $A(\B)\curl\B=\curl\B+\curl\B\times\B$, where $A(\B)$ is defined in \eqref{matrix-A}, we have
$$
\curl\B=A^{-1}(\B)(\nabla\varphi+{\bf g}),
$$
where $A^{-1}(\B)$ is the inverse matrix of $A(\B)$. We see that $\varphi$ also satisfies the following Neumann problem
\begin{equation}\label{eq-fai0}
\begin{cases}
\divg[A^{-1}(\B)(\nabla\varphi+{\bf g})]=0\q & \text{in } \O,\\
[A^{-1}(\B)(\nabla\varphi+{\bf g})]\cdot\nu=0\q&\text{on } \p\O.
\end{cases}
\end{equation}
Here we have used $\divg\curl\B=0$ in $\O$, $\curl\B\cdot\nu=0$ on $\p\O$ (see \cite[Lemma 2.4]{BP2007}).
On the other hand, $\B$ satisfies the following $\divg$-$\curl$ system
\begin{equation}\label{eq-div-curl}
\begin{cases}
\divg \B=0 \q&\text{in } \O,\\
\curl\B=A^{-1}(\B)(\nabla\varphi+{\bf g})\q & \text{in } \O,\\
\B\times\nu=\0\q&\text{on } \p\O.
\end{cases}
\end{equation}
We get the regularity of $\varphi$ from \eqref{eq-fai0} and then the regularity of $\B$ from \eqref{eq-div-curl}.
However, since we do not have a priori estimate on $L^\infty$ norm of $\B$, $A^{-1}(\B)$ may not satisfy the uniform ellipticity condition. This bring us a big difficulty to deal with \eqref{eq-fai0}. Moreover, even if we get rid of the difficulty, this method may not be applied directly into the steady Hall-MHD system because of the extra trouble caused by the term $\curl(\u\times\B)$.

In this paper, using Schauder's fixed point theorem, we show existence of a weak solution with H\"older continuous magnetic field to ~\eqref{Hall-MHD}, for small external forces $(\f,\mathbf{g})$ in $H^{-1}(\O,\R^3)\times L^q(\O,\R^3)$, $q>3$. Moreover, we prove that bounded magnetic field is indeed H\"older continuous, which improves Chae and W\"olf's result (continuous magnetic field is indeed  H\"older continuous). Besides, under the assumption that the magnetic field is bounded, we also get $H^2$ regularity. For sufficiently small external forces $(\f,\mathbf{g})$ in $H^{-1}(\O,\R^3)\times L^q(\O,\R^3)$, we obtain uniqueness result.

\subsection{Main results}\

Throughout this paper, we always assume that $\O$ is a bounded and simply-connected domain in $\R^3$ with a connected boundary $\p\O$ of class $C^{1,1}$. Let $(\f,\mathbf{g})\in H^{-1}(\O,\R^3)\times L^q(\O,\R^3)$, where $q>3$.

Under the condition of small external forces, we obtain existence of a weak solution of the steady Hall-MHD system \eqref{Hall-MHD} with H\"{o}lder continuous magnetic
field. Before stating our existence result, we first need to give a definition of the weak solutions of the steady Hall-MHD system.
We say that $(\u,p,\B)\in H_0^1(\divg0,\O)\times L^2(\O)\times H^1_{t0}(\divg0,\O)$ is a weak solution of \eqref{Hall-MHD} if $$\aligned
&\int_\O[\nabla \u:\nabla \mathbf{v}+(\u\cdot\nabla)\u\cdot\mathbf{v}-\curl\B\times\B\cdot\mathbf{v}]\, dx=\langle\f,\mathbf{v}\rangle_{H^{-1}(\O),H^1_0(\O)}, \\
&\int_\O(\curl\B+\curl\B\times\B-\u\times\B)\cdot\curl\D\,dx=\int_\O \mathbf{g}\cdot\curl\D\, dx,
\endaligned
$$
for any $(\mathbf{v},\D)\in H_0^1(\divg0,\O)\times W_{t0}^{1,3}(\divg0,\O)$. For the notation of spaces used in the above definition, see Section 2.

The existence result for the steady Hall-MHD system \eqref{Hall-MHD} reads as follows.
\begin{Thm}\label{existence}
For any $\kappa>0$, there exists $\eta=\eta(\O,q,\kappa)>0$ such that if
\begin{equation}\label{eq3.14}
\|\mathbf{f}\|_{H^{-1}(\O)}+\|\mathbf{g}\|_{L^q(\O)}\leq\eta,
\end{equation}
then the system \eqref{Hall-MHD}
has a weak solution $(\u,p,\B)\in H^1_0(\O,\R^3)\times L^2(\O)\times W^{1,q_1}(\O,\R^3)$ with the estimate
$$\|\B\|_{W^{1,q_1}(\O)}\leq\kappa,$$
where $q_1=\min\{q,6\}$. Hence $\B\in C^{0,1-3/q_1}(\overline{\O},\mathbb{R}^3)$.
\end{Thm}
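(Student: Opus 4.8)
The plan is to set up a Schauder fixed-point argument whose core is the transformation of the Hall equations described in the introduction, now carried out with the velocity coupling present. Fix $\kappa>0$ and let $X$ be the closed, convex, bounded subset of $W^{1,q_1}(\O,\R^3)$ (intersected with the appropriate divergence-free, tangential-trace space) consisting of those $\B$ with $\|\B\|_{W^{1,q_1}(\O)}\le\kappa$; by Rellich--Kondrachov this embeds compactly into $C^{0,\beta}(\overline\O,\R^3)$ for $\beta<1-3/q_1$, so $X$ is compact in the topology in which we will check continuity. Given $\tilde\B\in X$, I would first solve the fluid subproblem: with $\curl\tilde\B\times\tilde\B\in L^r$ for a suitable $r$ (using $\tilde\B\in L^\infty$ and $\curl\tilde\B\in L^{q_1}$) one gets a unique small $\u\in H^1_0(\divg0,\O)$ from the stationary Navier--Stokes system with forcing $\f+\curl\tilde\B\times\tilde\B$, provided $\eta$ and $\kappa$ are small; this is the standard Leray argument and gives $\|\u\|_{H^1}\lesssim\|\f\|_{H^{-1}}+\kappa^2$. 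Then freeze $\u$ and $\tilde\B$ in the magnetic equation and produce a new $\B$: since $\curl(\curl\B+\curl\B\times\tilde\B-\u\times\tilde\B-\mathbf g)=\0$ and $\O$ is simply connected, there is a potential $\varphi$ with $A(\tilde\B)\curl\B=\nabla\varphi+\mathbf g+\u\times\tilde\B$, where $\varphi$ solves the Neumann problem obtained by taking divergence, namely $\divg[A^{-1}(\tilde\B)(\nabla\varphi+\mathbf g+\u\times\tilde\B)]=0$ with the matching boundary condition. Because $\tilde\B\in L^\infty$ with $\|\tilde\B\|_\infty\le C\kappa$ small, $A^{-1}(\tilde\B)$ is uniformly elliptic with ellipticity constants close to $1$, so this linear Neumann problem is solvable and, invoking the $C^{0,\alpha}$ De Giorgi--Nash estimate for the scalar equation (this is exactly the place where having a scalar elliptic equation rather than a system is essential, and where the earlier discussion around \eqref{eq-fai0} applies) together with $L^p$ elliptic regularity, one gets $\nabla\varphi\in L^{q_1}$ with norm controlled by $\|\mathbf g\|_{L^q}+\|\u\|_{L^2}\|\tilde\B\|_\infty\lesssim\eta+\kappa^2$.

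Next I would recover $\B$ from the $\divg$-$\curl$ system $\divg\B=0$, $\curl\B=A^{-1}(\tilde\B)(\nabla\varphi+\mathbf g+\u\times\tilde\B)$, $\B\times\nu=\0$ on $\p\O$. The right-hand side is divergence-free (it equals $\curl\B$ by construction) and lies in $L^{q_1}$, so by the standard $\divg$-$\curl$ elliptic estimate on a simply connected $C^{1,1}$ domain with connected boundary (the vector potential / Friedrichs-type inequality cited via \cite{BP2007}) there is a unique $\B\in W^{1,q_1}(\O,\R^3)$ in the right space with
\begin{equation*}
\|\B\|_{W^{1,q_1}(\O)}\le C(\O,q)\bigl(\|\nabla\varphi\|_{L^{q_1}}+\|\mathbf g\|_{L^{q_1}}+\|\u\times\tilde\B\|_{L^{q_1}}\bigr)\le C(\O,q)\bigl(C_1\eta+C_2\kappa^2\bigr).
\end{equation*}
This defines the map $T:\tilde\B\mapsto\B$ on $X$. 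To see $T(X)\subseteq X$, choose $\kappa$ first so small that $C(\O,q)C_2\kappa\le 1/2$, and then choose $\eta=\eta(\O,q,\kappa)$ so small that $C(\O,q)C_1\eta\le\kappa/2$; then $\|\B\|_{W^{1,q_1}}\le\kappa$. Continuity of $T$ on $X$ (in the $W^{1,q_1}$ norm, say) follows by running the three steps for two inputs $\tilde\B_1,\tilde\B_2$ and subtracting: the Navier--Stokes map is Lipschitz in this regime, the coefficient $A^{-1}(\tilde\B)$ depends continuously (indeed locally Lipschitz) on $\tilde\B$ in $L^\infty$, so the difference $\varphi_1-\varphi_2$ solves a linear Neumann problem with controlled right-hand side and controlled coefficient perturbation, and the $\divg$-$\curl$ solve is linear; one gets $\|\B_1-\B_2\|_{W^{1,q_1}}\le C\|\tilde\B_1-\tilde\B_2\|_{W^{1,q_1}}$ with $C$ possibly large but finite, which together with compactness of $X$ is all Schauder needs. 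Hence $T$ has a fixed point $\B\in X$, and tracing back $(\u,p,\B)$ solves \eqref{Hall-MHD} in the weak sense; the H\"older continuity $\B\in C^{0,1-3/q_1}(\overline\O,\R^3)$ is then just Morrey's embedding of $W^{1,q_1}$, $q_1>3$.

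The main obstacle, and the step that needs the most care, is the De Giorgi--Nash / $L^p$ analysis of the Neumann problem \eqref{eq-fai0} with coefficient $A^{-1}(\tilde\B)$: a priori $\tilde\B$ is only in $W^{1,q_1}$, not obviously bounded with a \emph{small} bound, so one must exploit the Morrey embedding $\|\tilde\B\|_{L^\infty}\le C(\O,q)\|\tilde\B\|_{W^{1,q_1}}\le C(\O,q)\kappa$ to force the ellipticity constants of $A^{-1}(\tilde\B)$ into a neighborhood of those of the identity, and this is precisely why the theorem is stated for small data with the quantitative bound $\|\B\|_{W^{1,q_1}}\le\kappa$ rather than for general weak solutions. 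A secondary delicate point is keeping track of which exponent survives each step — one starts with $q$, the products $\curl\tilde\B\times\tilde\B$ and $\u\times\tilde\B$ and the $\divg$-$\curl$ regularity only return $q_1=\min\{q,6\}$ because $\u\in H^1\hookrightarrow L^6$ caps the integrability — so the bookkeeping that the scheme closes in $W^{1,q_1}$ (and not in some strictly smaller space at each iteration) has to be done honestly. Finally one should check the extra term $\curl(\u\times\tilde\B)$ does not spoil the potential construction: it does not, since $\u\times\tilde\B\in L^{q_1}$ and it enters only as an additional inhomogeneity $\u\times\tilde\B$ inside the already-established elliptic and $\divg$-$\curl$ solvability, exactly as $\mathbf g$ does.
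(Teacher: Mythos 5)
Your route is genuinely different from the paper's, and in outline it is viable: you iterate on the magnetic field alone, solve the full nonlinear Navier--Stokes problem for $\u$ at each step, and then construct the new $\B$ directly through the scalar Neumann problem for the potential $\varphi$ (the analogue of \eqref{eq-fai0} with the extra inhomogeneity $\u\times\tilde\B$) followed by the $\divg$-$\curl$ solve. The paper instead freezes $(\w,\H)$ in \emph{both} equations and solves the coupled \emph{linear} system \eqref{MHD-wH} by Lax--Milgram, exploiting the cancellation of the coupling terms $\curl\B\times\H\cdot\u$ and $\u\times\H\cdot\curl\B$ when testing with $(\u,\B)$; the potential/div-curl decomposition only enters inside the regularity Lemma \ref{lemma-3.1}, applied to an already existing solution. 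This structural choice is what lets the paper prove the theorem for \emph{arbitrary} $\kappa$: the energy bound \eqref{esti-uB} is independent of $\H$, the Maxwell-type estimate costs only a constant $C(\O,q,\kappa)$, and then $\eta=\kappa/C(\O,q,\kappa)$ closes the self-map. Your scheme, by contrast, produces the quadratic term $\kappa^2$ from the Lorentz force and therefore needs $\kappa$ small. That restriction is harmless for the statement --- the conclusion for a small $\kappa_0$ implies it for every $\kappa\ge\kappa_0$ by simply taking $\eta(\kappa)=\eta(\kappa_0)$ --- but you should say so explicitly; and your diagnosis that smallness of $\kappa$ is needed to keep $A^{-1}(\tilde\B)$ uniformly elliptic is off the mark: $A^{-1}(\tilde\B)$ is uniformly elliptic with constant $(1+\|\tilde\B\|_{L^\infty}^2)^{-1}$ for \emph{any} bounded $\tilde\B$, which is exactly why the paper needs no smallness of $\kappa$ at all.

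The one genuine gap is the application of Schauder's theorem. You declare $X$ (the ball $\|\tilde\B\|_{W^{1,q_1}}\le\kappa$) ``compact in the topology in which we will check continuity,'' but then check continuity in the $W^{1,q_1}$ norm; $X$ is not compact in $W^{1,q_1}$, a Lipschitz map need not be compact, and your final estimate $\|\B_1-\B_2\|_{W^{1,q_1}}\le C\|\tilde\B_1-\tilde\B_2\|_{W^{1,q_1}}$ therefore gives neither hypothesis of Schauder's theorem: you need either continuity of $T$ with respect to the weaker topology in which $X$ \emph{is} compact (say $C^{0,\beta}$ or $L^\infty$), or compactness of $T$ on $X$ in $W^{1,q_1}$, and neither follows from what you wrote. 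The obstruction is concrete: in the fluid step the force difference contains $\curl(\tilde\B_1-\tilde\B_2)\times\tilde\B_1$, which is not small merely because $\|\tilde\B_1-\tilde\B_2\|_{L^\infty}$ is small, so continuity in the weak topology is not automatic. It can be repaired --- e.g.\ integrate by parts, using $(\tilde\B_1-\tilde\B_2)\times\nu=\0$ on $\p\O$, to bound this term in $H^{-1}(\O)$ by $\|\tilde\B_1-\tilde\B_2\|_{L^\infty(\O)}$ times quantities controlled by $\kappa$, or argue by weak convergence plus an energy estimate for differences as in Step 3 of the paper's proof --- but this extra step is exactly what is missing from your argument and must be supplied before Schauder can be invoked.
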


The presence of Hall term $\curl(\curl\B\times\B)$ makes it difficult for us to get the regularity of weak solutions of \eqref{Hall-MHD}. Under the assumption that the magnetic field is bounded, we establish H\"older continuity and $H^2$ regularity for the magnetic field.
\begin{Thm}\label{Reg-Holder}
Assume that $(\u,p,\B)\in H^1_0(\O,\R^3)\times L^2(\O)\times H^1(\O,\R^3)$ is a weak solution of \eqref{Hall-MHD}.
If $\B\in L^\infty(\O,\Bbb R^3)$, then $\B\in C^{0,\alpha}(\overline{\O},\Bbb R^3)$, where
$$\alpha=1-\frac{3}{q_1},\; q_1=\min\{q,6\}.$$
Furthermore, if $\p\O$ is of class $C^{2,1}$ and $(\mathbf{f},\mathbf{g})\in L^{2}(\O,\mathbb{R}^3)\times H^1(\O,\mathbb{R}^3)$, then $\u,\B\in H^2(\O,\Bbb R^3)$.
\end{Thm}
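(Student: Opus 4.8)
The plan is to realise the magnetic equation as a scalar Neumann problem coupled to a $\divg$-$\curl$ system, as in the Introduction, and to let the hypothesis $\B\in L^\infty$ play the role that smallness plays in Theorem~\ref{existence}: it makes $A^{-1}(\B)$ uniformly elliptic. Given a weak solution $(\u,p,\B)$ with $\B\in L^\infty(\O,\R^3)$, the second equation of \eqref{Hall-MHD} gives $\curl\bigl(\curl\B+\curl\B\times\B-\u\times\B-\mathbf g\bigr)=\0$; since $\O$ is simply connected there is $\varphi\in H^1(\O)$ with $\curl\B+\curl\B\times\B-\u\times\B-\mathbf g=\nabla\varphi$, so, writing $h:=\mathbf g+\u\times\B$ and using $A(\B)\curl\B=\curl\B+\curl\B\times\B$, one has $\curl\B=A^{-1}(\B)(\nabla\varphi+h)$; then $\varphi$ solves the Neumann problem \eqref{eq-fai0} and $\B$ the $\divg$-$\curl$ system \eqref{eq-div-curl}, in both cases with $\mathbf g$ replaced by $h$. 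The symmetric part of $A(\B)$ is $I$, so the symmetric part of $A^{-1}(\B)$ is $\frac1{1+|\B|^2}(I+\B\otimes\B)$, which for $\B\in L^\infty$ is uniformly elliptic and bounded with constants depending only on $\|\B\|_{L^\infty(\O)}$. Finally, from the first equation $\u\in H^1_0(\O,\R^3)\hookrightarrow L^6$, hence $\u\times\B\in L^6$ and $h\in L^{q_1}(\O,\R^3)$ with $q_1=\min\{q,6\}>3$.

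The crux of the H\"older assertion is to show $\nabla\varphi\in L^{q_1}(\O)$. Once this holds, $\curl\B=A^{-1}(\B)(\nabla\varphi+h)\in L^{q_1}$, and since $\divg\B=0$ and $\B\times\nu=\0$ on $\p\O$, the $\divg$-$\curl$ regularity theory gives $\B\in W^{1,q_1}(\O,\R^3)$, hence $\B\in C^{0,1-3/q_1}(\overline\O,\R^3)$ by Sobolev embedding ($q_1>3$). To reach $\nabla\varphi\in L^{q_1}$ from \eqref{eq-fai0} I would bootstrap: uniform ellipticity and the De Giorgi--Nash--Moser theory for divergence-form equations with bounded measurable coefficients and $L^{q_1}$-data give $\varphi\in C^{0,\beta}(\overline\O)$ for some $\beta>0$, while Meyers' estimate gives $\nabla\varphi\in L^{2+\var}$ for some $\var>0$, hence $\B\in W^{1,2+\var}$ via \eqref{eq-div-curl} and so $A^{-1}(\B)\in W^{1,2+\var}\cap L^\infty$; one then iterates, using the gain in \eqref{eq-div-curl} together with the improved regularity of the coefficient $A^{-1}(\B)$ to raise the exponent in the Calder\'on--Zygmund estimate for \eqref{eq-fai0}, hence the integrability of $\nabla\varphi$, hence (through \eqref{eq-div-curl}) of $\nabla\B$, until one reaches $\nabla\varphi\in L^{q_1}$. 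This last point is the main obstacle: $\B\in L^\infty\cap H^1$ does not by itself place $A^{-1}(\B)$ in $\mathrm{VMO}$ in three dimensions, so the $L^p$-theory for VMO coefficients is not available from the start, and the delicate step is to push the iteration until $\B$ becomes continuous, after which the coefficients of \eqref{eq-fai0} are continuous and the full $L^{q_1}$-estimate closes the loop.

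For the $H^2$ statement assume further $\p\O\in C^{2,1}$ and $(\f,\mathbf g)\in L^2(\O,\R^3)\times H^1(\O,\R^3)$, and use the output of the first part: $\B\in W^{1,q_1}(\O,\R^3)\cap L^\infty$ with $q_1>3$. First bootstrap $\u$ in the Stokes system $-\Delta\u+\nabla p=\f-(\u\cdot\nabla)\u+\curl\B\times\B$, $\divg\u=0$, $\u|_{\p\O}=\0$: here $\curl\B\times\B\in L^{q_1}$, $(\u\cdot\nabla)\u\in L^{3/2}$ and $\f\in L^2$, so Stokes regularity gives $\u\in W^{2,3/2}(\O,\R^3)\hookrightarrow W^{1,3}$, whence $(\u\cdot\nabla)\u\in L^2$ and a second application gives $\u\in H^2(\O,\R^3)\hookrightarrow W^{1,6}\cap L^\infty$. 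Then $h=\mathbf g+\u\times\B\in H^1$ (as $\mathbf g\in H^1$ and $\u\times\B\in W^{1,q_1}\subset H^1$) and $A^{-1}(\B)\in W^{1,q_1}\cap L^\infty$ with $q_1>3$, so $H^2$-regularity for \eqref{eq-fai0} on the $C^{2,1}$ domain gives $\varphi\in H^2(\O)$; differentiating $\curl\B=A^{-1}(\B)(\nabla\varphi+h)$ and using that the product of a $W^{1,q_1}$ function ($q_1>3$) and an $H^1$ function lies in $H^1$ gives $\curl\B\in H^1$, and the $H^2$ version of the $\divg$-$\curl$ estimate finally yields $\B\in H^2(\O,\R^3)$. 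Apart from the input of the first part, this is a finite, routine bootstrap.
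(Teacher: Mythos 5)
Your reduction of the magnetic equation to the scalar Neumann problem for $\varphi$ coupled with the $\divg$-$\curl$ system for $\B$, with the $L^\infty$ bound on $\B$ supplying uniform ellipticity of $A^{-1}(\B)$, is exactly the paper's starting point, and your final $H^2$ bootstrap (Navier--Stokes regularity for $\u$, then $\varphi\in H^2$, then $\B\in H^2$ via the $\divg$-$\curl$ estimate) is essentially the paper's second step and is sound \emph{given} the first part. The genuine gap is precisely the step you yourself flag as ``the main obstacle'': you never prove $\nabla\varphi\in L^{q_1}(\O)$, equivalently the H\"older continuity of $\B$, which is the central assertion of the theorem. The Meyers-plus-iteration scheme you sketch does not close: Meyers' estimate gives $\nabla\varphi\in L^{2+\var}$ with $\var$ determined solely by the ellipticity ratio, and knowing in addition that $A^{-1}(\B)\in W^{1,2+\var}\cap L^\infty$ buys nothing, since $W^{1,p}$ with $p<3$ does not embed into $\mathrm{VMO}$ or $C^0$ in three dimensions, so no Calder\'on--Zygmund improvement beyond $2+\var$ is available. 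To raise the integrability of $\nabla\varphi$ you need better-than-measurable coefficients, and to improve the coefficients you need better integrability of $\nabla\B$, hence of $\nabla\varphi$: the argument is circular and the iteration stalls at $2+\var$.

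The paper escapes this circle by working in Campanato rather than Lebesgue scales, and this is the idea missing from your proposal. Lemma \ref{Camp-Neu} (the De Giorgi--Nash estimate in Campanato form, from \cite{Tro1987}) applies to \eqref{eq-faiuB} with merely bounded measurable uniformly elliptic coefficients and data $\u\times\B+\mathbf{g}\in L^{q_1}\hookrightarrow L^{2,\mu}$, and yields $\nabla\varphi\in L^{2,\mu}(\O)$ for every $\mu<\min\{1+2\delta,\,3-6/q_1\}$; since $q_1>3$ and $\delta>0$, one may choose $\mu>1$. The Campanato estimate for the $\divg$-$\curl$ system (Lemma \ref{Lem2.1}) transfers this to $\nabla\B\in L^{2,\mu}$, and Lemma \ref{M-Campanato} (iii) and (ii) give $\B\in L^{2,\mu+2}(\O,\R^3)\cong C^{0,(\mu-1)/2}(\overline\O,\R^3)$. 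Once $\B$ is H\"older continuous, the coefficient $A(\B)$ is H\"older, and Lemma \ref{lemma-3.1} applied to the Maxwell-type system \eqref{eq-B} upgrades $\B$ to $W^{1,q_1}(\O,\R^3)\hookrightarrow C^{0,1-3/q_1}(\overline\O,\R^3)$ --- exactly the input your last paragraph assumes. So your second and third paragraphs are acceptable modulo this input, but as written the key step of the proof is acknowledged rather than supplied, and the route you propose for it would not succeed.
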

\begin{Rem}
Theorem \ref{Reg-Holder} is slightly stronger than \cite[Theorem 6.2]{Chae-2015}, which is established under the assumption that $\B$ is continuous.
\end{Rem}

For any arbitrarily given constant $\kappa>0$, we obtain the existence of at least one weak solution ~$(\u,p,\B)$ of the Hall-MHD system~\eqref{Hall-MHD} satisfying the estimate $\|\B\|_{W^{1,q_1}(\O)}\leq \kappa$ under certain assumption on ~$\f$ and ~$\mathbf{g}$ in Theorem~\ref{existence}. For sufficiently small external forces~$\f$ and $\mathbf{g}$, we prove that such weak solution of~\eqref{Hall-MHD} is unique. More precisely, we have the following theorem.

\begin{Thm}\label{uniqueness}
  There exists a constant~$\epsilon=\epsilon(\O,q)$ such that~\eqref{Hall-MHD} admits a unique solution ~$(\u,p,\B)$ in $H_0^1(\div0,\O)\times (L^2(\O)/\R)\times [H^1_{t0}(\div0,\O)\cap L^\infty(\O,\R^3)]$ for any $(\f,\mathbf{g})$ satisfying
  \begin{equation}\label{eq5.1}
    \|\f\|_{H^{-1}(\O)}+\|\mathbf{g}\|_{L^q(\O)}\leq \epsilon.
  \end{equation}
\end{Thm}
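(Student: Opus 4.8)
The plan is to prove uniqueness by a standard energy/contraction argument: suppose $(\u_1,p_1,\B_1)$ and $(\u_2,p_2,\B_2)$ are two solutions in the stated class, both of which (by the a priori bounds behind Theorem~\ref{existence} and the smallness of $(\f,\mathbf{g})$) satisfy small norm estimates, say $\|\u_i\|_{H^1_0(\O)}+\|\B_i\|_{W^{1,q_1}(\O)}\le C\epsilon$, and then show that the differences $\w=\u_1-\u_2$, $\Phi=p_1-p_2$, $\h=\B_1-\B_2$ must vanish. First I would subtract the two weak formulations to get the system satisfied by $(\w,\Phi,\h)$; schematically,
\begin{equation*}
\begin{cases}
-\Delta\w+(\u_1\cdot\nabla)\w+(\w\cdot\nabla)\u_2+\nabla\Phi=\curl\B_1\times\h+\curl\h\times\B_2,\\
\curl(\curl\h+\curl\B_1\times\h+\curl\h\times\B_2-\u_1\times\h-\w\times\B_2)=\0,\\
\divg\w=\divg\h=0,\q \w=\0,\ \h\times\nu=\0 \text{ on }\p\O.
\end{cases}
\end{equation*}
Then I would test the first equation with $\w$ and the second with $\h$ (legitimate since $\h\in H^1_{t0}(\div0,\O)\cap L^\infty$, so $\h$ is an admissible test function after checking the integrability needed for the Hall term), and add the resulting identities.

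The key cancellation is that the symmetric magnetic coupling terms drop out when the two equations are added: the terms $\int_\O \curl\h\times\B_2\cdot\w$ from the momentum equation and $\int_\O(\w\times\B_2)\cdot\curl\h$ from the magnetic equation cancel after integration by parts (this is the usual MHD energy structure), and similarly for the pieces involving $\B_1$, while $\int_\O(\u_1\times\h)\cdot\curl\h = \int_\O \curl\h\times\B\!\cdot\!\dots$ needs the identity $\int_\O(\u_1\cdot\nabla)\w\cdot\w=0$ and $\int_\O(\u_1\times\h)\cdot\curl\h=-\int_\O(\u_1\cdot\nabla)\h\cdot\h$-type manipulations; the net effect is an estimate of the form
\begin{equation*}
\|\nabla\w\|_{L^2(\O)}^2+\|\curl\h\|_{L^2(\O)}^2\le C\big(\|\nabla\u_2\|_{L^?}\,\|\w\|^2+\|\B_i\|_{W^{1,q_1}}+\|\u_1\|_{H^1}\big)\big(\|\nabla\w\|_{L^2}^2+\|\curl\h\|_{L^2}^2\big),
\end{equation*}
where I use Hölder, Sobolev embedding $H^1\hookrightarrow L^6$, and (for $\h$) the fact that $\|\h\|_{H^1}\le C\|\curl\h\|_{L^2}$ on this space together with the $W^{1,q_1}\hookrightarrow L^\infty$ bound on $\B_1,\B_2$. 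Crucially the Hall term $\curl\h\times\B_2$ and $\curl\B_1\times\h$ must be controlled: $\int\curl\h\times\B_2\cdot\w$ and the trilinear Hall contributions are bounded using $\|\B_2\|_{L^\infty}$, $\|\curl\B_1\|_{L^{q_1}}$ and interpolation, so their coefficients are all $O(\epsilon)$.

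Choosing $\epsilon$ small enough that $C\cdot O(\epsilon)<\tfrac12$, the inequality forces $\|\nabla\w\|_{L^2}^2+\|\curl\h\|_{L^2}^2\le 0$, hence $\w=\0$ and $\curl\h=\0$; since $\divg\h=0$, $\h\times\nu=\0$ and $\O$ is simply connected with connected boundary, $\h=\0$, and then $\nabla\Phi=\0$ so $\Phi$ is constant, i.e. $p_1=p_2$ in $L^2(\O)/\R$. The main obstacle I expect is not the algebra of cancellations but bookkeeping the borderline integrability of the Hall nonlinearities: the term $\curl\h\times\B_2\cdot\w$ and especially the difference of two Hall terms $\curl\B_1\times\h-\curl\B_2\times\h$-type expressions involve a product of a gradient-order magnetic factor, an $L^\infty$ magnetic factor, and an $H^1$ velocity/magnetic factor, so one must exploit $\B_i\in L^\infty$ (which is exactly why that hypothesis appears in the statement) rather than merely $H^1$, and verify that every trilinear term genuinely carries a factor that is $O(\epsilon)$ under the smallness assumption \eqref{eq5.1}. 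Establishing that $\u_i,\B_i$ obey the requisite smallness bounds (an a priori estimate for any solution in the given class, not just the constructed one) is the other point requiring care, and would be done by testing the original equations \eqref{Hall-MHD} with $(\u_i,\B_i)$ themselves and absorbing.
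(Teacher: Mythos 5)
Your overall strategy (subtract the two weak formulations, test with the difference $(\w,\h)$, exploit the MHD-type cancellations, and absorb under smallness) is the same as the paper's, and your difference system and the cancellation of the $\B_2$-coupling terms are correct. But there is a genuine gap in how you obtain the smallness of the coefficients. You assume that \emph{both} solutions satisfy $\|\u_i\|_{H^1}+\|\B_i\|_{W^{1,q_1}}\le C\epsilon$, and you claim the trilinear Hall contributions have $O(\epsilon)$ coefficients ``using $\|\B_2\|_{L^\infty}$, $\|\curl\B_1\|_{L^{q_1}}$''. Neither claim can be justified for the arbitrary competitor: the second solution is only assumed to lie in $H^1_{t0}(\divg0,\O)\cap L^\infty(\O,\R^3)$, its $L^\infty$ norm is in no way controlled by $\epsilon$, and your proposed remedy --- testing \eqref{Hall-MHD} with $(\u_i,\B_i)$ itself --- only yields the $H^1$ energy bound $\|\nabla\u_i\|_{L^2}+\|\curl\B_i\|_{L^2}\le C\epsilon$ (all nonlinear terms cancel, so there is nothing to ``absorb''); it can never produce a $W^{1,q_1}$ or $L^\infty$ smallness bound for $\B_2$. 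The qualitative $W^{1,q_1}$ regularity of $\B_2$ that follows from Theorem \ref{Reg-Holder} comes with constants depending on $\|\B_2\|_{L^\infty}$, hence is useless for a contraction in $\epsilon$.

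The paper resolves this by breaking the symmetry: one of the two solutions is taken to be the solution \emph{constructed} in Theorem \ref{existence}, for which $\|\B_1\|_{W^{1,q_1}}\le\kappa$ and $\|\u_1\|_{H^1}+\|\B_1\|_{H^1}\le C_1\epsilon$ are known, and the difference identity is arranged so that every surviving trilinear term carries only norms of $(\u_1,\B_1)$ (namely $\|\curl\B_1\|_{L^{3/2}}$, $\|\curl\B_1\|_{L^{3}}$, $\|\u_1\|_{L^3}$, $\|\nabla\u_1\|_{L^2}$), while all terms containing $(\u_2,\B_2)$ cancel identically; the hypothesis $\B_2\in L^\infty$ is used only qualitatively, to make the Hall integrals finite and the testing legitimate, never as a small coefficient. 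The conclusion then follows by choosing $\kappa=1/(4C_2(\O,q))$ (a fixed constant, not $O(\epsilon)$) and $\epsilon\le\min\{1/(4C_1C_2),\eta(\O,q,\kappa)\}$. Your own difference system already has the right structure for this --- after the cancellations the remaining terms involve only $\curl\B_1$, $\u_1$ and $\nabla\u_2$, the last of which needs only the $H^1$ energy bound --- so the fix is to drop the symmetric smallness assumption, identify solution $1$ with the constructed solution, estimate only its higher norms, and then note that $\u=\B=\0$ shows the arbitrary solution coincides with the constructed one.
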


The paper is organized as follows. In section 2 we list some notations and several known results that will be used in this paper. In Section 3, we prove existence of a weak solution with H\"older continuous magnetic field for small external forces. Regularity of weak solutions shall be discussed in section 4. Uniqueness of weak solutions with bounded magnetic fields is proved for sufficiently small external forces in section 5.

\section{Preliminaries}

We use $L^p(\O)$, $W^{k,p}(\O)$ and  $C^{k,\a}(\overline\O)$ to denote the usual Lebesgue spaces, Sobolev spaces and H\"older spaces for scalar functions, and $L^p(\O,\Bbb R^3)$, $W^{k,p}(\O,\Bbb R^3)$ and  $C^{k,\a}(\overline\O,\Bbb R^3)$  to denote the corresponding spaces of vector fields. However we use the same notation to denote both the norm of scalar functions and that of vector fields in the corresponding spaces. For instance, we write $\|\phi\|_{L^p(\O)}$ for $\phi\in L^p(\O)$ and write $\|\u\|_{L^p(\O)}$ for $\u\in L^p(\O,\Bbb R^3)$.
We also use the following notations:
$$\aligned
&H^1_0(\divg0,\O)=\{\u\in H^1_0(\O,\Bbb R^3): \divg\u=0\;\text{in }\O\},\\
&H^1_{t0}(\divg0,\O)=\{\u\in H^1(\O,\Bbb R^3): \divg\u=0\;\text{in }\O, \u\times\nu=\0\;\text{on }\p\O\},\\
&W^{1,p}_{t0}(\divg0,\O)=\{\u\in W^{1,p}(\O,\Bbb R^3): \divg\u=0\;\text{in }\O, \u\times\nu=\0\;\text{on }\p\O\}.\\
\endaligned
$$

Let us define a matrix-valued function $A(\B)$ by
\begin{equation}\label{matrix-A}
A(\B)=
\begin{pmatrix}
1 &B_3 & -B_2\\
-B_3 & 1 & B_1\\
B_2 & -B_1 & 1\\
\end{pmatrix}.
\end{equation}
For this matrix-valued function, we have the following conclusions:
\begin{itemize}
\item[(i)] For any $\xi\in \mathbb{R}^3$, we have $A(\B)\xi=\xi+\xi\times\B$.
\item[(ii)] $A(\B)$ is an invertible matrix and its inverse matrix is
$$
A^{-1}(\B)=\frac{1}{1+|\B|^2}
\begin{pmatrix}
1+B_1^2 &B_1B_2-B_3 & B_1B_3+B_2\\
B_1B_2+B_3& 1+B_2^2 & B_2B_3-B_1\\
B_1B_3-B_2 & B_2B_3+B_1 & 1+B_3^2\\
\end{pmatrix}.
$$
Let $A_{ij}^{-1}(\B)$ denote the $\{i,j\}$ element of the matrix $A^{-1}(\B)$.
Then we have $|A_{ij}^{-1}(\B)|\leq 1$.
\item[(iii)] For any $\xi\in \mathbb{R}^3$, it holds that
$$
\aligned
&\langle A(\B)\xi, \xi\rangle=|\xi|^2,\\
&\langle A^{-1}(\B)\xi, \xi\rangle=\frac{1}{1+|\B|^2}\left[|\xi|^2+(\B\cdot\xi)^2\right].
\endaligned
$$
Thus
$$\frac{1}{1+|\B|^2}|\xi|^2\leq \langle A^{-1}(\B)\xi, \xi\rangle\leq |\xi|^2.
$$
\end{itemize}

We use $L^{2,\mu}(\O)$ to denote a Campanato space, which consists of scalar functions satisfying
$$\|u\|_{L^{2,\mu}(\O)}=\Big(\|u\|_{L^2(\O)}^2+\sup_{{x_0\in \overline{\O},}\atop {0<r<\infty}}r^{-\mu}\int_{\O_r(x_0)}|u-u_{x_0,r}|^2dx\Big)^{1/2}<\infty,
$$
where
$$\O_r(x_0)=\O\cap B_r(x_0),\q
u_{x_0,r}=\frac{1}{|\O_r(x_0)|}\int_{\O_r(x_0)}u(x)dx.
$$
Campanato spaces play a key role in our proof of regularity of weak solutions to \eqref{Hall-MHD}.
Below we list some properties for Campanato spaces, which can be found in \cite[Theorem 1.17, Lemma 1.19, Theorem 1.40]{Tro1987}.

\begin{Lem}\label{M-Campanato}
We have the following conclusions:
\begin{itemize}
\item[(i)] Let $0\leq \mu<3$. Then the mapping
$$u\mapsto \Big(\sup_{{x_0\in \overline{\O}}\atop {0<r<\infty}}r^{-\mu}\int_{\O_r(x_0)}u^2dx\Big)^{1/2}
$$
defines an equivalent norm on $L^{2,\mu}(\O)$. Hence $L^\infty(\O)$ is a space of multipliers for $L^{2,\mu}(\O)$. That is to say, for any $u\in L^{2,\mu}(\O)$ and any $v\in L^\infty(\O)$, we have
$$\|uv\|_{L^{2,\mu}(\O)}\leq C(\mu,\O)\|u\|_{L^{2,\mu}(\O)}\|v\|_{L^\infty(\O)}.
$$

\item[(ii)] Let $3<\mu\leq 5$. Then $L^{2,\mu}(\O)$ is isomorphic to $C^{0,\delta}(\overline{\O})$ for $\delta=(\mu-3)/2$.

\item[(iii)]Let $0\leq \mu<3$. If $u\in H^1(\O)$ and $\nabla u\in L^{2,\mu}(\O,\R^3)$, then $u\in L^{2,2+\mu}(\O)$ with
$$\|u\|_{L^{2,2+\mu}(\O)}\leq C(\mu,\O)(\|u\|_{L^2(\O)}+\|\nabla u\|_{L^{2,\mu}(\O)}).
$$

\item[(iv)] We have the following embedding:
$$\aligned
&L^{2,\lambda}(\O)\hookrightarrow L^{2,\mu}(\O)\q\;\text{\rm if } 0\leq \mu<\lambda\leq 5,\\
&L^p(\O)\hookrightarrow L^{2,\mu}(\O)\q\q\text{\rm if }p>2,\;\; \mu=3(p-2)/p.
\endaligned
$$
\end{itemize}
\end{Lem}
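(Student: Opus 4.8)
The plan is to verify the four items separately, using throughout that the ambient dimension is $n=3$, the integrability exponent is $p=2$, and that the $C^{1,1}$ regularity of $\p\O$ forces the measure--density condition $|\O_r(x_0)|\ge c(\O)\,r^3$ uniformly over $x_0\in\overline\O$ and $0<r\le\mathrm{diam}\,\O$. This lower volume bound makes the averages $u_{x_0,r}$ well behaved and is used silently in every nontrivial estimate below. Throughout write $[u]$ for the Campanato seminorm and $R=\mathrm{diam}\,\O$, and recall that averages minimize the $L^2$ distance, so $\int_{\O_r(x_0)}|u-u_{x_0,r}|^2\le\int_{\O_r(x_0)}|u-c|^2$ for every constant $c$.

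For item (i) and the first embedding of (iv), the core is that in the subcritical range $\mu<3=n$ the mean--subtracted seminorm is comparable to the plain Morrey quantity $\big(\sup_{x_0,r}r^{-\mu}\int_{\O_r}u^2\big)^{1/2}$. One direction is immediate. For the converse I would bound the average itself by a dyadic telescoping chain: the minimizing property and the volume bound give $|u_{x_0,\rho}-u_{x_0,2\rho}|\le C[u]\,\rho^{(\mu-3)/2}$, and because $\mu<3$ the geometric series $\sum_{k\ge0}(2^kr)^{(\mu-3)/2}$ converges, yielding $|u_{x_0,r}|\le|u_{x_0,R}|+C[u]\,r^{(\mu-3)/2}$. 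Multiplying by $r^3$ and using $r^{3-\mu}\le R^{3-\mu}$ converts this into $r^{-\mu}\int_{\O_r}u^2\le C(\|u\|_{L^2}^2+[u]^2)$, i.e.\ the two norms are equivalent. The multiplier property then follows at once from $\int_{\O_r}|uv|^2\le\|v\|_{L^\infty}^2\int_{\O_r}u^2$. The first embedding in (iv) is pure bookkeeping: for $r\le R$ one has $r^{-\mu}=r^{\lambda-\mu}r^{-\lambda}\le R^{\lambda-\mu}r^{-\lambda}$, so the $L^{2,\lambda}$ bound dominates the $L^{2,\mu}$ one; the second embedding follows from H\"older's inequality, $\int_{\O_r}|u|^2\le\|u\|_{L^p}^2|\O_r|^{1-2/p}\le C\|u\|_{L^p}^2r^{3(1-2/p)}=C\|u\|_{L^p}^2r^\mu$, combined with the norm equivalence just proved.

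Items (ii) and (iii) are the substantive ones. For (iii) I would invoke the Poincar\'e inequality on $\O_r(x_0)$, namely $\int_{\O_r}|u-u_{x_0,r}|^2\le C(\O)\,r^2\int_{\O_r}|\nabla u|^2$ with an $r$--independent constant; feeding in $\int_{\O_r}|\nabla u|^2\le C r^\mu[\nabla u]^2$ (the plain Morrey bound from (i), legitimate since $\mu<3$) gives $\int_{\O_r}|u-u_{x_0,r}|^2\le C r^{2+\mu}([\nabla u]^2+\|u\|_{L^2}^2)$, which is exactly $u\in L^{2,2+\mu}$ with the stated estimate. Item (ii) is Campanato's characterization of H\"older continuity and carries the main difficulty. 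Here $3<\mu\le5$ gives $\delta=(\mu-3)/2\in(0,1]$, and the same telescoping difference estimate $|u_{x_0,\rho}-u_{x_0,2\rho}|\le C[u]\,\rho^{\delta}$ now sums as $\rho\to0$ because $\delta>0$; this shows $\{u_{x_0,r}\}$ is Cauchy, defines a continuous representative $\tilde u(x_0)=\lim_{r\to0}u_{x_0,r}$, and yields $|u_{x_0,r}-\tilde u(x_0)|\le C[u]\,r^\delta$. Comparing two base points through the common region $\O_r(x)\cap\O_r(y)$ with $r\simeq|x-y|$ (again using the volume and overlap bounds) gives $|\tilde u(x)-\tilde u(y)|\le C[u]\,|x-y|^\delta$, i.e.\ $\tilde u\in C^{0,\delta}(\overline\O)$; the reverse bound $\|u\|_{L^{2,\mu}}\le C\|u\|_{C^{0,\delta}}$ is elementary since $|u-u_{x_0,r}|\le[u]_{C^{0,\delta}}r^\delta$ on $\O_r$, and these two inequalities furnish the asserted isomorphism.

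The hardest part is item (ii): producing a genuinely continuous representative and controlling its H\"older seminorm through boundary balls $\O_r(x_0)$ with $x_0\in\p\O$, where one must know both that $|\O_r(x_0)|\simeq r^3$ and that two nearby truncated balls overlap in a set of measure $\gtrsim r^3$. Verifying the uniform Poincar\'e constant needed for (iii) on these same truncated balls is the second delicate point. Both reduce to the measure--density property of $\O$, which the $C^{1,1}$ (in particular Lipschitz) regularity of $\p\O$ supplies; with that geometric input in hand the estimates above close.
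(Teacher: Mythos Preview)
The paper does not give a proof of this lemma at all: it simply records these Campanato--space facts as standard and points to Troianiello's monograph (Theorems~1.17 and~1.40 and Lemma~1.19 there) for the arguments. There is therefore nothing in the paper to compare your write-up against line by line.

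That said, your sketch is correct and is in fact the textbook route that a reference such as Troianiello follows. The dyadic telescoping estimate $|u_{x_0,\rho}-u_{x_0,2\rho}|\le C[u]\,\rho^{(\mu-3)/2}$, summed outward for $\mu<3$ to get the Morrey--Campanato equivalence in (i), and summed inward for $\mu>3$ to obtain the Campanato characterization of H\"older continuity in (ii), is exactly the classical argument. Your derivations of (iii) via a scaled Poincar\'e inequality on $\O_r(x_0)$ and of (iv) via H\"older's inequality and monotonicity in $\mu$ are also the standard ones. The two technical caveats you flag---the uniform constant in Poincar\'e on truncated boundary balls, and the measure lower bound $|\O_r(x_0)\cap\O_r(y_0)|\gtrsim r^3$ needed to compare averages at nearby boundary points---are genuine, and both are resolved by the Lipschitz (here $C^{1,1}$) regularity of $\p\O$, as you note. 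Nothing is missing; your proposal is a self-contained substitute for the citation the paper gives.
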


The $L^{2,\mu}$ regularity of first derivatives for the Neumann problem
\begin{equation}\label{BN}
\begin{cases}
\divg(M\nabla u)=\divg\F\q\text{\rm in } \O,\\
(M\nabla u)\cdot\nu=\F\cdot\nu\q\text{\rm on } \p\O,
\end{cases}
\end{equation}
can be derived by Campanato's method, see \cite[Theorem 2.19]{Tro1987}.

\begin{Lem}\label{Camp-Neu}
Suppose the matrix-valued function $M$ satisfies
$$\lambda|\xi|^2\leq \langle M\xi,\xi\rangle\leq \Lambda|\xi|^2,\q \forall \xi\in\Bbb R^3,
$$
where $0<\lam\leq\Lambda<\infty$. There exist constants $C>0$ and $\delta\in(0,1)$, both depending only on $\O,\lambda,\Lambda$, such that for $0<\mu<1+2\delta$,  if $\F\in L^{2,\mu}(\O,\R^3)$, and if $u\in H^1(\O)$ is a weak solution of \eqref{BN},
then $\nabla u\in L^{2,\mu}(\O,\R^3)$, and we have the estimate
$$\|\nabla u\|_{L^{2,\mu}(\O)}\leq C(\|u\|_{H^1(\O)}+\|\F\|_{L^{2,\mu}(\O)}).
$$
\end{Lem}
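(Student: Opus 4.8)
The plan is to run Campanato's perturbation scheme: on each set $\O_r(x_0)$ I compare $u$ with the solution of the homogeneous problem, exploit a De Giorgi--Nash energy-decay estimate for the latter, and then iterate. Fix $x_0\in\overline\O$ and set $\phi(r)=\int_{\O_r(x_0)}|\nabla u|^2\,dx$. Since the range $0<\mu<1+2\delta<3$ keeps me strictly below the exponent $3$, Lemma~\ref{M-Campanato}(i) shows that the conclusion $\nabla u\in L^{2,\mu}(\O,\R^3)$ together with the stated bound is equivalent to the Morrey estimate $\sup_{x_0,r}r^{-\mu}\phi(r)\le C(\|u\|_{H^1(\O)}^2+\|\F\|_{L^{2,\mu}(\O)}^2)$, so this is what I aim to establish. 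The constant $\delta$ will be produced as the interior-and-boundary De Giorgi--Nash H\"older exponent determined by $\lambda,\Lambda$ and $\O$.

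The central step is a decay estimate for the homogeneous equation. Let $w$ solve $\divg(M\nabla w)=0$ in $\O_r(x_0)$ with homogeneous conormal condition $(M\nabla w)\cdot\nu=0$ on $\p\O\cap B_r(x_0)$ when $x_0$ is near $\p\O$. By De Giorgi--Nash theory for divergence-form equations with bounded measurable coefficients --- in its interior form for $x_0$ well inside $\O$, and in its boundary (Neumann) form near $\p\O$, the latter using that $\p\O\in C^{1,1}$ to flatten the boundary together with the measure-density property $|\O_r(x_0)|\ge c\,r^3$ --- the function $w$ is H\"older continuous with some exponent $\delta=\delta(\lambda,\Lambda,\O)\in(0,1)$ and $[w]_{C^{0,\delta}}^2\le C\,r^{-1-2\delta}\int_{\O_r(x_0)}|\nabla w|^2\,dx$. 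Combining this with a (boundary) Caccioppoli inequality and Poincar\'e's inequality, after subtracting the mean of $w$ (constants solve the equation), yields for $0<\rho\le r$ the decay
\begin{equation*}
\int_{\O_\rho(x_0)}|\nabla w|^2\,dx\le C\Big(\frac{\rho}{r}\Big)^{1+2\delta}\int_{\O_r(x_0)}|\nabla w|^2\,dx .
\end{equation*}

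Next I compare. Take $w$ as above with the additional Dirichlet matching $w-u\in H^1_0$ along the interior part $\O\cap\p B_r(x_0)$, and set $v=u-w$. Then $v$ is a weak solution of $\divg(M\nabla v)=\divg\F$ with conormal data $(M\nabla v)\cdot\nu=\F\cdot\nu$ on $\p\O\cap B_r(x_0)$ and $v=0$ on the interior boundary; testing the weak formulation with $v$ makes the Neumann boundary term cancel and gives $\int_{\O_r(x_0)}M\nabla v\cdot\nabla v=\int_{\O_r(x_0)}\F\cdot\nabla v$, whence by ellipticity $\int_{\O_r(x_0)}|\nabla v|^2\le C\int_{\O_r(x_0)}|\F|^2$. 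The minimality of $w$ in the energy $\int M\nabla\cdot\nabla$ among functions with its trace (with $u$ as competitor) gives $\int_{\O_r(x_0)}|\nabla w|^2\le (\Lambda/\lambda)\int_{\O_r(x_0)}|\nabla u|^2$. Splitting $\nabla u=\nabla w+\nabla v$, inserting the decay of the previous step, and using $\int_{\O_r(x_0)}|\F|^2\le \|\F\|_{L^{2,\mu}(\O)}^2\,r^\mu$ (Lemma~\ref{M-Campanato}(i)), I obtain
\begin{equation*}
\phi(\rho)\le C\Big(\frac{\rho}{r}\Big)^{1+2\delta}\phi(r)+C\,\|\F\|_{L^{2,\mu}(\O)}^2\,r^\mu,\qquad 0<\rho\le r\le r_0 .
\end{equation*}

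Finally, since $\mu<1+2\delta$, the standard iteration lemma for nonnegative nondecreasing functions (e.g. \cite[Ch.~III]{Tro1987}) upgrades this to $\phi(\rho)\le C\rho^{\mu}\big(r_0^{-\mu}\phi(r_0)+\|\F\|_{L^{2,\mu}(\O)}^2\big)$ for all small $\rho$; choosing $r_0$ comparable to $\mathrm{diam}\,\O$ and bounding $\phi(r_0)\le\|\nabla u\|_{L^2(\O)}^2\le\|u\|_{H^1(\O)}^2$ gives the Morrey bound uniformly in $x_0$, and hence the claim. The main obstacle is the boundary case of the homogeneous decay estimate: establishing De Giorgi--Nash H\"older regularity up to $\p\O$ for the conormal (Neumann) problem with merely measurable coefficients, together with the matching boundary Caccioppoli and Poincar\'e inequalities on the half-ball-type sets $\O_r(x_0)$. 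This is precisely where the $C^{1,1}$ regularity of $\p\O$ and the measure-density of $\O$ enter, and it is what pins down the admissible range $0<\mu<1+2\delta$; by contrast the interior estimate and the comparison/iteration machinery are routine.
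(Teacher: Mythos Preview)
The paper does not supply its own proof of this lemma; it simply states that the result ``can be derived by Campanato's method'' and cites \cite[Theorem~2.19]{Tro1987}. Your proposal is a correct outline of precisely that Campanato perturbation scheme (comparison with the homogeneous conormal problem, De Giorgi--Nash decay for the homogeneous part, energy estimate for the remainder, and the standard iteration lemma), so it matches the approach the paper defers to.
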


We will frequently use the following key estimate for the $\divg$-$\curl$ system, which can be founded in \cite[Theorem 2.3]{AS2011a} and \cite[Corollary 3.2]{AS2013}.
\begin{Lem}\label{Lem2.3}
Let $k$ be a positive integer and $p\in (1,\infty)$. Assume that $\p\O$ is of class $C^{k,1}$.
If $\u\in W^{k-1,p}(\O,\mathbb{R}^3)$, $\divg\u\in W^{k-1,p}(\O)$, $\curl\u\in W^{k-1,p}(\O,\mathbb{R}^3)$ and $\u\times\nu=\0$ on $\p\O$, then
$\u\in W^{k,p}(\O,\mathbb{R}^3)$ and
$$\|\u\|_{W^{k,p}(\O)}\leq C(\|\divg\u\|_{W^{k-1,p}(\O)}+\|\curl\u\|_{W^{k-1,p}(\O)}),$$
where the constant $C$ depends only on $\O,k,p$.
\end{Lem}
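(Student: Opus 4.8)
The plan is to reconstruct $\u$ from its divergence and curl by peeling off a scalar potential and then estimating a divergence-free remainder, thereby reducing the whole statement to $L^p$ elliptic regularity for one scalar and one vector Poisson equation. Write $f=\divg\u\in W^{k-1,p}(\O)$ and $\h=\curl\u\in W^{k-1,p}(\O,\R^3)$, and record the two compatibility relations that come for free: $\divg\h=\divg\curl\u=0$ in $\O$, and $\h\cdot\nu=\curl\u\cdot\nu=0$ on $\p\O$ (the latter because $\u\times\nu=\0$ forces the tangential part of $\u$, hence its surface curl, to vanish). First I would solve the scalar Dirichlet problem $\Delta\phi=f$ in $\O$, $\phi=0$ on $\p\O$. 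Since $\p\O\in C^{k,1}$ and $f\in W^{k-1,p}(\O)$, Calder\'on--Zygmund / Agmon--Douglis--Nirenberg estimates for the Dirichlet Laplacian give $\phi\in W^{k+1,p}(\O)$ with $\|\nabla\phi\|_{W^{k,p}(\O)}\le C\|f\|_{W^{k-1,p}(\O)}$; and because $\phi$ is constant on $\p\O$ its tangential gradient vanishes there, i.e. $\nabla\phi\times\nu=\0$ on $\p\O$.

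Next set $\w=\u-\nabla\phi\in W^{k-1,p}(\O,\R^3)$. Then $\divg\w=f-\Delta\phi=0$ in $\O$, $\curl\w=\h$ in $\O$, and $\w\times\nu=\u\times\nu-\nabla\phi\times\nu=\0$ on $\p\O$, so it remains to prove $\|\w\|_{W^{k,p}(\O)}\le C\|\h\|_{W^{k-1,p}(\O)}$ for this divergence-free, tangentially vanishing field. Using the identity $-\Delta\w=\curl\curl\w-\nabla\divg\w=\curl\h$, I would view $\w$ as the solution of the vector Poisson equation $-\Delta\w=\curl\h$ in $\O$ (interpreted in the $W^{-1,p}(\O)$ sense when $k=1$), supplemented by the three scalar boundary conditions $\w\times\nu=\0$ (two conditions) and $\divg\w=0$ (one condition) on $\p\O$. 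The boundary operators $\{\,\cdot\times\nu,\ \divg\,\}$ complement the componentwise Laplacian in the sense of Lopatinskii--Shapiro, so Agmon--Douglis--Nirenberg $L^p$ theory applies up to the boundary and yields the a priori bound $\|\w\|_{W^{k,p}(\O)}\le C\big(\|\curl\h\|_{W^{k-2,p}(\O)}+\|\w\|_{W^{k-1,p}(\O)}\big)\le C\big(\|\h\|_{W^{k-1,p}(\O)}+\|\w\|_{W^{k-1,p}(\O)}\big)$.

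To remove the lower-order term I would invoke that the space of fields $\bv$ with $\divg\bv=0$, $\curl\bv=\0$ in $\O$ and $\bv\times\nu=\0$ on $\p\O$ is trivial; this is exactly where the standing hypotheses that $\O$ is simply connected with connected boundary enter (the relevant first Betti number vanishes, so there are no nonzero tangential harmonic fields). Triviality of this kernel together with the compact embedding $W^{k,p}(\O)\hookrightarrow W^{k-1,p}(\O)$ lets a standard compactness--contradiction argument absorb $\|\w\|_{W^{k-1,p}(\O)}$ into the left-hand side. Combining the two pieces gives $\|\u\|_{W^{k,p}(\O)}\le\|\nabla\phi\|_{W^{k,p}(\O)}+\|\w\|_{W^{k,p}(\O)}\le C\big(\|\divg\u\|_{W^{k-1,p}(\O)}+\|\curl\u\|_{W^{k-1,p}(\O)}\big)$, which is the assertion, the constant depending only on $\O,k,p$ through the ellipticity constants and the $C^{k,1}$ norms of the boundary charts.

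I expect the main obstacle to be the boundary estimate for $\w$: concretely, verifying that $\w\mapsto(\w\times\nu,\ \divg\w)$ satisfies the complementing condition so that ADN applies, and then carrying the estimate to the required order $k$ by localization and flattening of $\p\O$, which is where the hypothesis $\p\O\in C^{k,1}$ is used. A secondary point, needed to make the reduction rigorous, is checking that the Poisson solution genuinely satisfies $\curl\w=\h$ and not merely $\curl\curl\w=\curl\h$: taking the divergence of $-\Delta\w=\curl\h$ shows $\divg\w$ is harmonic with zero trace, hence $\divg\w\equiv0$, and then the compatibilities $\divg\h=0$, $\h\cdot\nu=0$ together with the kernel triviality above upgrade $\curl(\curl\w-\h)=\0$ to $\curl\w=\h$.
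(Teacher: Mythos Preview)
The paper does not prove Lemma~\ref{Lem2.3}; it quotes it as a known result from Amrouche--Seloula \cite{AS2011a,AS2013} and uses it as a black box. So there is no ``paper's own proof'' to compare against.

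Your sketch is a sound and standard route to this type of $\divg$--$\curl$ estimate: peel off a gradient via a scalar Dirichlet problem, then treat the divergence-free remainder $\w$ as a solution of the vector Laplacian with the (complementing) boundary pair $(\w\times\nu,\divg\w)$, invoke ADN, and kill the lower-order term using the triviality of tangential harmonic fields on a simply connected domain with connected boundary. The points you flag as delicate --- checking the Lopatinskii--Shapiro condition for that boundary pair, making sense of traces and of $\curl\h\in W^{-1,p}$ when $k=1$, and carrying the localization/flattening to order $k$ using $\p\O\in C^{k,1}$ --- are exactly the places where work is needed, and none of them is a genuine obstruction.

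One small confusion: your ``secondary point'' at the end is unnecessary. You defined $\w=\u-\nabla\phi$ directly, so $\curl\w=\curl\u=\h$ holds by construction; there is nothing to ``upgrade'' from $\curl(\curl\w-\h)=\0$. The ADN step is being used only as an a~priori estimate for this specific $\w$, not to produce $\w$ as the solution of an abstract boundary value problem, so the consistency check you describe never arises.
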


We also need the $L^{2,\mu}$ regularity of first derivatives for the $\divg$-$\curl$ system, which can be found in the second part of the proof of \cite[Theorem 3.4]{Yin2004} or \cite[Lemma 11]{Al2018}.
\begin{Lem}\label{Lem2.1}
Let $\mu\in[0,2)$. If $\u\in L^{2}(\O,\mathbb{R}^3)$, $\divg\u\in L^{2,\mu}(\O)$, $\curl\u\in L^{2,\mu}(\O,\mathbb{R}^3)$ and $\u\times\nu=\0$ on $\p\O$, then
$\nabla\u\in L^{2,\mu}(\O,\mathbb{R}^{3\times3})$ and
$$\|\nabla\u\|_{L^{2,\mu}(\O)}\leq C(\|\divg\u\|_{L^{2,\mu}(\O)}+\|\curl\u\|_{L^{2,\mu}(\O)}),$$
where the constant $C$ depends only on $\O,\mu$.
\end{Lem}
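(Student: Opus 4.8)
The plan is to prove the sharp local Morrey bound
$$\sup_{x_0\in\overline\O,\ 0<r<R_0}r^{-\mu}\int_{\O_r(x_0)}|\nabla\u|^2\,dx\le C\,\big(\|\divg\u\|_{L^{2,\mu}(\O)}^2+\|\curl\u\|_{L^{2,\mu}(\O)}^2\big),$$
which, by the equivalence of the Morrey and Campanato norms for $0\le\mu<3$ in Lemma~\ref{M-Campanato}(i), is precisely the assertion $\nabla\u\in L^{2,\mu}(\O)$ with the claimed estimate (the largest scale $r\sim\mathrm{diam}(\O)$ absorbs the $L^2$ term of the norm). Write $F=\divg\u$, $\G=\curl\u$, put $D^2=\|F\|_{L^{2,\mu}(\O)}^2+\|\G\|_{L^{2,\mu}(\O)}^2$, and set $\Phi(x_0,r)=\int_{\O_r(x_0)}|\nabla\u|^2\,dx$. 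The method is a Campanato comparison argument: on each small region $\O_R(x_0)$ I split $\u$ into the gradient of a harmonic potential carrying the tangential trace (whose gradient energy decays like a power of $r/R$), plus a remainder whose gradient energy is controlled by the local $L^2$ mass of $F$ and $\G$ via the $\divg$-$\curl$ estimate of Lemma~\ref{Lem2.3}.

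For the construction, fix $R_0$ so small that $\O_R(x_0)$ is simply connected for $R\le R_0$, and let $\h=\nabla\xi$ be the curl-free, divergence-free field on $\O_R(x_0)$ matching the tangential trace, $\h\times\nu=\u\times\nu$ on $\p\O_R(x_0)$; equivalently $\xi$ is harmonic with Dirichlet data determined (up to a constant) by integrating the tangential part of $\u$ along $\p\O_R(x_0)$. Since $\u\times\nu=0$ on the portion of $\p\O_R(x_0)$ lying in $\p\O$, that data is constant there, so after flattening the boundary an odd reflection extends $\xi$ harmonically across it. Each component of $\h=\nabla\xi$ is then harmonic, and the interior (respectively boundary) gradient estimate for harmonic functions yields the decay
$$\int_{\O_r(x_0)}|\nabla\h|^2\,dx\le C\,(r/R)^3\int_{\O_R(x_0)}|\nabla\h|^2\,dx,\qquad 0<r\le R.$$
Setting $\w=\u-\h$ gives $\divg\w=F$, $\curl\w=\G$ in $\O_R(x_0)$ and $\w\times\nu=0$ on all of $\p\O_R(x_0)$; the scale-invariant, zero-tangential-trace form of Lemma~\ref{Lem2.3} ($k=1$, $p=2$), rescaled to a fixed reference configuration, then gives
$$\int_{\O_R(x_0)}|\nabla\w|^2\,dx\le C\,\big(\|F\|_{L^2(\O_R(x_0))}^2+\|\G\|_{L^2(\O_R(x_0))}^2\big)\le C\,R^\mu D^2,$$
where the last step is the definition of the (equivalent) Morrey norm.

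Combining $\nabla\u=\nabla\h+\nabla\w$ with the two displays and $\int_{\O_R}|\nabla\h|^2\le 2\Phi(x_0,R)+2\int_{\O_R}|\nabla\w|^2$ produces the basic decay inequality
$$\Phi(x_0,r)\le C\,(r/R)^3\,\Phi(x_0,R)+C\,R^\mu D^2,\qquad 0<r\le R\le R_0.$$
Because $\mu<3$, a standard iteration lemma for nonnegative nondecreasing functions (see, e.g., \cite{Tro1987}) upgrades this to $\Phi(x_0,r)\le C\,r^\mu\big(R_0^{-\mu}\Phi(x_0,R_0)+D^2\big)$ for all $r\le R_0$. Finally $\Phi(x_0,R_0)\le\|\nabla\u\|_{L^2(\O)}^2\le C\,D^2$ by Lemma~\ref{Lem2.3} once more, so $\sup_{x_0,r}r^{-\mu}\Phi(x_0,r)\le C\,D^2$, which is exactly the desired bound.

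The main obstacle is the boundary case $x_0\in\p\O$, where both the harmonic decay for $\h$ and the rescaled form of Lemma~\ref{Lem2.3} must be justified near the curved boundary and the corner of $\O\cap B_R$ must be avoided. The plan is to localize in coordinates flattening $\p\O$ by a $C^{1,1}$ diffeomorphism (using $\p\O\in C^{1,1}$) and to work on half-balls rather than on $\O\cap B_R$; the flattening turns the constant-coefficient $\divg$-$\curl$ structure into one with $C^{0,1}$ coefficients, and freezing these coefficients at $x_0$ reduces the comparison field to a genuinely harmonic one at the cost of an extra term $\le C\,R^2\Phi(x_0,R)$, absorbed for small $R$ by the perturbed iteration lemma. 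The condition $\u\times\nu=\0$ is precisely what makes the reflected extension harmonic across the flattened boundary, and it is this interplay of reflection and coefficient freezing, together with keeping the constants in Lemma~\ref{Lem2.3} uniform as $x_0$ ranges over $\overline\O$ and $R\to0$, that must be controlled; the range $\mu\in[0,2)$ stated here is the one established in \cite{Yin2004,Al2018} and suffices for our application.
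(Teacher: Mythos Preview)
The paper does not prove this lemma itself; it is quoted from \cite[Theorem~3.4]{Yin2004} and \cite[Lemma~11]{Al2018}, so there is no in-paper argument to compare against. Your overall Campanato comparison strategy is indeed the right one and is essentially what those references carry out, but the specific comparison field $\h$ you construct does not exist in general, and this is a genuine gap rather than a boundary technicality.

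You set $\h=\nabla\xi$ with $\xi$ harmonic on $\O_R(x_0)$ and Dirichlet data ``determined by integrating the tangential part of $\u$ along $\p\O_R(x_0)$''. For that line integral on the sphere $\p B_R(x_0)$ to be path-independent, the tangential component $\u_\tau$ must be surface-curl-free there; but the surface curl of $\u_\tau$ equals $\nu\cdot\curl\u=\nu\cdot\G$, which has no reason to vanish. Hence there is in general no scalar $\xi$ with $\nabla\xi\times\nu=\u\times\nu$ on $\p B_R(x_0)$, the remainder $\w=\u-\h$ does not satisfy $\w\times\nu=\0$ on all of $\p\O_R(x_0)$, and your appeal to Lemma~\ref{Lem2.3} for $\w$ on that region is unjustified. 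The standard repair is to drop the scalar potential and take $\h$ to be the \emph{componentwise} harmonic extension of $\u$ with full Dirichlet data $\h=\u$ on $\p B_R(x_0)$; then $\w=\u-\h$ vanishes on the boundary, satisfies $-\Delta\w=-\nabla F+\curl\G$, and testing against $\w$ gives
$\int_{B_R}|\nabla\w|^2\le(\|F\|_{L^2(B_R)}+\|\G\|_{L^2(B_R)})\|\nabla\w\|_{L^2(B_R)}$
directly, yielding $\int_{B_R}|\nabla\w|^2\le CR^\mu D^2$. Near $\p\O$, after flattening, $\u\times\nu=\0$ becomes $u_1=u_2=0$ on $\{x_3=0\}$, and it is the odd/odd/even reflection of $(u_1,u_2,u_3)$---not a reflection of a scalar potential $\xi$---that reduces the half-ball estimate to the interior one; this is what the cited references actually do.
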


\section{Existence of the steady Hall-MHD system}

We shall need the following regularity result for elliptic equations of Maxwell's type.
\begin{Lem}\label{lemma-3.1}
Let $\alpha\in (0,1)$ and $q>2$. Assume $\H\in C^{0,\alpha}(\overline{\O},\mathbb{R}^3)$. Let $\mathbf{G}\in L^q(\O,\mathbb{R}^3)$ and $\B\in H^1(\O)$ solve the system
\begin{equation}\label{eq-Maxwell-H}
\begin{cases}
\curl(A(\H)\curl\B)=\curl\mathbf{G}& \text{ in } \O,\\
\divg \B=0 & \text{ in } \O,\\
\B\times\nu=\0 &\text{ on } \p\O.
\end{cases}
\end{equation}
Then $\B\in W^{1,q}(\O,\mathbb{R}^3)$ with the estimate
\begin{equation}\label{eq-lem-2}
\|\B\|_{W^{1,q}(\O)}\leq C\|\mathbf{G}\|_{L^q(\O)},
\end{equation}
where the constant $C$ depends on $\O$, $q$ and the upper bound of $\|\H\|_{C^{0,\alpha}(\overline{\O})}$.
\end{Lem}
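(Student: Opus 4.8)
The plan is to reduce the Maxwell-type system \eqref{eq-Maxwell-H} to a scalar conormal Neumann problem together with a $\divg$-$\curl$ system, exactly as in the transformation sketched in the Introduction, with the decisive simplification that here the coefficient field $\H$ is \emph{given} and H\"older continuous. Indeed, $A^{-1}(\H)$ is then automatically uniformly elliptic (by property (iii) of $A$, with ellipticity constants depending only on $\|\H\|_{L^\infty(\O)}$) and has entries in $C^{0,\alpha}(\overline{\O})$ (each $A^{-1}_{ij}(\H)$ is a $C^\infty$ function of $H_1,H_2,H_3$, its denominator $1+|\H|^2$ being $\geq 1$), with $C^{0,\alpha}$-norm controlled by the upper bound of $\|\H\|_{C^{0,\alpha}(\overline{\O})}$. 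All constants below will depend only on $\O$, $q$ and this upper bound.

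First I would record the energy estimate: testing the weak form of \eqref{eq-Maxwell-H} with $\D=\B$ and using $\langle A(\H)\curl\B,\curl\B\rangle=|\curl\B|^2$ gives $\|\curl\B\|_{L^2(\O)}\leq\|\mathbf{G}\|_{L^2(\O)}\leq C\|\mathbf{G}\|_{L^q(\O)}$ (as $q>2$ and $\O$ is bounded), whence $\|\B\|_{H^1(\O)}\leq C\|\mathbf{G}\|_{L^q(\O)}$ by Lemma \ref{Lem2.3} with $k=1$, $p=2$. Since $\curl(A(\H)\curl\B-\mathbf{G})=\0$ in $\O$ and $\O$ is simply connected, there is $\varphi\in H^1(\O)$ with $\int_\O\varphi\,dx=0$ and $\nabla\varphi=A(\H)\curl\B-\mathbf{G}$, so $\|\varphi\|_{H^1(\O)}\leq C\|\mathbf{G}\|_{L^q(\O)}$; multiplying by $A^{-1}(\H)$ yields $\curl\B=A^{-1}(\H)(\nabla\varphi+\mathbf{G})$. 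Because $\divg\curl\B=0$ in $\O$ and $\curl\B\cdot\nu=0$ on $\p\O$ (the latter from $\B\times\nu=\0$ on $\p\O$ via \cite[Lemma 2.4]{BP2007}), a test-function computation shows $\varphi$ is a weak solution of $\divg[A^{-1}(\H)\nabla\varphi]=\divg\mathbf{F}$ in $\O$, $[A^{-1}(\H)\nabla\varphi]\cdot\nu=\mathbf{F}\cdot\nu$ on $\p\O$, with $\mathbf{F}=-A^{-1}(\H)\mathbf{G}$ and $\|\mathbf{F}\|_{L^q(\O)}\leq C\|\mathbf{G}\|_{L^q(\O)}$ (entries of $A^{-1}(\H)$ are bounded by $1$).

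The main step is the $L^q$ estimate for this Neumann problem: by the standard Calder\'on--Zygmund theory for divergence-form elliptic equations with continuous (in particular VMO) coefficients on a $C^{1,1}$ domain, $\nabla\varphi\in L^q(\O)$ with $\|\nabla\varphi\|_{L^q(\O)}\leq C\big(\|\mathbf{F}\|_{L^q(\O)}+\|\varphi\|_{H^1(\O)}\big)\leq C\|\mathbf{G}\|_{L^q(\O)}$. This is exactly where H\"older (or even just VMO/continuity) regularity of $\H$ is essential: Lemma \ref{Camp-Neu} by itself only reaches a Morrey exponent $\mu<1+2\delta<3$, which does not control an arbitrary $L^q$ norm with $q$ large, and it is the delicate point of the argument to have that $L^q$ bound depend on $\H$ only through $\|\H\|_{C^{0,\alpha}(\overline{\O})}$. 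Finally I would feed $\curl\B=A^{-1}(\H)(\nabla\varphi+\mathbf{G})\in L^q(\O)$, together with $\divg\B=0$ and $\B\times\nu=\0$ on $\p\O$, back into Lemma \ref{Lem2.3}: if $2<q\leq 6$ then $\B\in H^1(\O)\hookrightarrow L^6(\O)\subset L^q(\O)$, so Lemma \ref{Lem2.3} with $p=q$ gives $\|\B\|_{W^{1,q}(\O)}\leq C\|\curl\B\|_{L^q(\O)}\leq C\|\mathbf{G}\|_{L^q(\O)}$ directly; if $q>6$ one first applies it with $p=6$ to obtain $\B\in W^{1,6}(\O)\hookrightarrow L^\infty(\O)\subset L^q(\O)$ (with the matching bound) and then with $p=q$ to deduce \eqref{eq-lem-2}. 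Everything outside the $L^q$ Neumann estimate is bookkeeping with the algebraic identities for $A(\H)$ and with the $\divg$-$\curl$ estimate of Lemma \ref{Lem2.3}.
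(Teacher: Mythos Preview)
Your proof is correct and follows essentially the same route as the paper: energy estimate, reduction to a scalar Neumann problem for $\varphi$ via simple-connectedness, $L^q$ regularity for $\nabla\varphi$ (the paper cites \cite[Theorem 3.16 (iv)]{Tro1987} for this step rather than invoking generic Calder\'on--Zygmund/VMO theory, but it is the same result), and then Lemma~\ref{Lem2.3} applied to the $\divg$-$\curl$ system. Your additional bootstrap for the case $q>6$ is in fact slightly more careful than the paper's argument, which applies Lemma~\ref{Lem2.3} with $p=q$ directly without first verifying $\B\in L^q$.
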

\begin{proof}
Taking the solution $\B$ as a test function of \eqref{eq-Maxwell-H}, we can obtain $L^2$ estimate for $\curl\B$:
\begin{equation}\label{esti-curlB}
\|\curl\B\|_{L^2(\O)}\leq \|\mathbf{G}\|_{L^2(\O)}.
\end{equation}
Since $\O$ is simply-connected and
$$\text{$\curl(A(\H)\curl\B-\mathbf{G})=\0$ in $\O$,}$$
 there exists $\varphi\in H^1(\O)/\R$ such that
\begin{equation}\label{decomp-phi-H}
A(\H)\curl\B-\mathbf{G}=\curl\B+\curl\B\times\H-\mathbf{G}=\nabla\varphi,
\end{equation}
where $\varphi$ satisfies the equation
\begin{equation*}
\begin{cases}
\Delta\varphi=\divg(\curl\B\times\H-{\bf G})=0\q & \text{in } \O,\\
\frac{\p\varphi}{\p\nu}=(\curl\B\times\H-{\bf G})\cdot\nu \q&\text{on } \p\O.
\end{cases}
\end{equation*}
Here we have used $\divg\curl\B=0$ in $\O$, $\curl\B\cdot\nu=0$ on $\p\O$ (see \cite[Lemma 2.4]{BP2007}).

Since $A(\H)$ is invertible, we have
\begin{equation}\label{decomp-phi-1}
\curl\B=A^{-1}(\H)(\nabla\varphi+{\bf G}).
\end{equation}
We see that $\varphi$ also satisfies the following Neumann problem
\begin{equation}\label{eq-fai}
\begin{cases}
\divg[A^{-1}(\H)(\nabla\varphi+{\bf G})]=0\q & \text{in } \O,\\
[A^{-1}(\H)(\nabla\varphi+{\bf G})]\cdot\nu=0\q&\text{on } \p\O.
\end{cases}
\end{equation}
Since $\H\in C^{0,\alpha}(\overline{\O},\mathbb{R}^3)$, we have
$A^{-1}(\H)\in C^{0,\alpha}(\overline{\O},\mathbb{R}^{3\times 3})$ with $\|A^{-1}(\H)\|_{C^{0,\alpha}(\overline{\O})}$ controlled by some constant that depends only on the upper bound of $\|\H\|_{C^{0,\alpha}(\overline{\O})}$, and
$$
\frac{1}{1+\|\H\|_{C^{0,\alpha}(\overline{\O})}^2}|\xi|^2\leq \langle A^{-1}(\H)\xi, \xi\rangle\leq |\xi|^2 \text{ for any $\xi\in \mathbb{R}^3$.}
$$
Applying \cite[Theorem 3.16 (iv)]{Tro1987} to \eqref{eq-fai}, using \eqref{decomp-phi-H} and \eqref{esti-curlB}, we get $\nabla\varphi\in L^{q}(\O)$ with the estimate
\begin{equation}\label{esti-phi-q}
\aligned
\|\nabla\varphi\|_{L^{q}(\O)}&\leq C(\|A^{-1}(\H){\bf G}\|_{L^{q}(\O)}+\|\varphi\|_{H^1(\O)})\\
&\leq C(\|{\bf G}\|_{L^{q}(\O)}+\|\nabla\varphi\|_{L^2(\O)})\\
&\leq C\|\mathbf{G}\|_{L^q(\O)},
\endaligned
\end{equation}
where $C$ depends only on $\O$, $q$ and the upper bound of $\|\H\|_{C^{0,\alpha}(\overline{\O})}$.

Applying the $L^p$ regularity theory (see Lemma \ref{Lem2.3}) for the div-curl system
\begin{equation*}
\begin{cases}
\curl\B=A^{-1}(\H)(\nabla\varphi+{\bf G}) & \text{ in } \O,\\
\divg \B=0 & \text{ in } \O,\\
\B\times\nu=\0 &\text{ on } \p\O,
\end{cases}
\end{equation*}
we have $\B\in W^{1,q}(\O,\mathbb{R}^3)$ with the estimate
\begin{equation*}
\aligned
\|\B\|_{W^{1,q}(\O)}&\leq C(\O,q)\|\curl\B\|_{L^{q}(\O)}=C(\O,q)\|A^{-1}(\H)(\nabla\varphi+{\bf G})\|_{L^{q}(\O)}.
\endaligned
\end{equation*}
So \eqref{eq-lem-2} follows from the above inequality and \eqref{esti-phi-q}.

\end{proof}

Now we are in a position to prove the existence result.

\begin{proof}[Proof of Theorem \ref{existence}]
{\it Step 1.} For any given $(\w,\H)\in H^1_0(\divg0,\O)\times W^{1,q_1}_{t0}(\divg0,\O)$, we prove existence of a unique solution of the following system
\begin{equation}\label{MHD-wH}
\begin{cases}
-\Delta \u+(\w\cdot\nabla)\u+\nabla p-\curl\B\times\H={\bf f}\q &\text{in } \O,\\
\curl(\curl\B+\curl\B\times\H-\u\times\H)=\curl{\bf g}\q & \text{in } \O,\\
\divg \u=\divg\B=0 \q&\text{in } \O,\\
\u=\0,\q\B\times\nu=\0\q &\text{on } \p\O.
\end{cases}
\end{equation}
Note that
$$\H\in W^{1,q_1}(\O,\mathbb{R}^3)\subseteq C^{0,1-3/q_1}(\overline{\O},\mathbb{R}^3).$$
We define a bilinear functional $a((\u,\B),(\bv,\D))$ as follows:
$$
\aligned
a((\u,\B),(\bv,\D))=&\int_{\O}\{\nabla\u:\nabla\bv+[(\w\cdot\nabla)\u-\curl\B\times\H]\cdot\bv\}dx+\\
&\int_{\O}(\curl\B+\curl\B\times\H-\u\times\H)\cdot\curl\D dx.
\endaligned
$$
Then \eqref{MHD-wH} is equivalent to the formulation
$$a((\u,\B),(\bv,\D))=\langle\f,\mathbf{v}\rangle_{H^{-1}(\O),H^1_0(\O)}+\int_{\O}\mathbf{g}\cdot\curl\D dx.$$
By Lax-Milgram theorem and with the help of Poincar\'{e} type inequality for $\divg$-$\curl$ system (see Lemma \ref{Lem2.3} or \cite{Pi1984}), we obtain existence of unique weak solution $(\u,p,\B)\in H^1_0(\divg0,\O)\times (L^2(\O)/\R)\times H^1_{t0}(\divg0,\O)$ of \eqref{MHD-wH} with the estimate
\begin{equation}\label{esti-uB}
\aligned
\|\u\|_{H^1(\O)}+\|\B\|_{H^1(\O)}&\leq C(\O)(\|\mathbf{f}\|_{H^{-1}(\O)}+\|\mathbf{g}\|_{L^2(\O)})\\
&\leq C(\O,q)(\|\mathbf{f}\|_{H^{-1}(\O)}+\|\mathbf{g}\|_{L^q(\O)}).
\endaligned
\end{equation}

We rewrite the equations of $\B$ as follows:
\begin{equation}\label{eq-Maxwell-3.25}
\begin{cases}
\curl(\curl\B+\curl\B\times\H)=\curl({\bf g+\u\times\H})\q & \text{in } \O,\\
\divg\B=0 \q&\text{in } \O,\\
\B\times\nu=\0\q &\text{on } \p\O.
\end{cases}
\end{equation}
Since $\H\in C^{0,1-3/q_1}(\overline{\O},\mathbb{R}^3)$ and $\u\times\H\in L^6(\O,\mathbb{R}^3)$, by Lemma \ref{lemma-3.1} we derive that $\B\in W^{1,q_1}(\O,\mathbb{R}^3)$.

{\it Step 2.} For any given $(\w,\H)$ above, define an operator $\mathrm{T}$ by $\mathrm{T}(\w,\H)=(\u,\B)$.
For any $\kappa>0$, let $\|\H\|_{W^{1,q_1}(\O)}\leq \kappa$. Then $\|\H\|_{C^{0,1-3/q_1}(\overline{\O})}\leq C(\O,q)\kappa$. Applying Lemma \ref{lemma-3.1} to \eqref{eq-Maxwell-3.25}, we get
\begin{equation}\label{esti-B-W1q1}
\aligned
\|\B\|_{W^{1,q_1}(\O)}&\leq C(\O,q_1,\kappa)\|\mathbf{g}+\u\times\H\|_{L^{q_1}(\O)}\\
&\leq C(\O,q_1,\kappa)(\|\mathbf{g}\|_{L^q(\O)}+\|\u\|_{L^6(\O)})\\
&\leq C(\O,q,\kappa)(\|\mathbf{f}\|_{H^{-1}(\O)}+\|\mathbf{g}\|_{L^q(\O)}).
\endaligned
\end{equation}
Set $\eta=\kappa/C(\O,q,\kappa)$, where $C(\O,q,\kappa)$ is the constant in the above inequality. Set $K=C(\O,q)\eta$, where $C(\O,q)$ is the constant in \eqref{esti-uB}. We define
$$
\aligned
\mathcal{D}=&\{(\w,\H)\in H^1_0(\divg0,\O)\times W^{1,q_1}_{t0}(\divg0,\O):\\
&\|\w\|_{H^1(\O)}+\|\H\|_{H^1(\O)}\leq K,\;\|\H\|_{W^{1,q_1}(\O)}\leq\kappa\}.
\endaligned
$$
Obviously, $\mathcal{D}$ is a bounded, closed and convex subset of $H^1_0(\divg0,\O)\times W^{1,q_1}_{t0}(\divg0,\O)$.
If we let
$$\|\mathbf{f}\|_{H^{-1}(\O)}+\|\mathbf{g}\|_{L^q(\O)}\leq \eta,$$
from \eqref{esti-uB} and \eqref{esti-B-W1q1} we see that $\mathrm{T}$ maps $\mathcal{D}$ into itself.

{\it Step 3.} We show that $\mathrm{T}$ is continuous and compact from $\mathcal{D}$ into $\mathcal{D}$. First, we prove that $\mathrm{T}$ is continuous. Assume that $(\w_k,\H_k)$, $(\w,\H)\in\mathcal{D}$ and
$$\text{$\w_k\rightarrow\w$ in $H^1(\O,\mathbb{R}^3)$ and $\H_k\rightarrow\H$ in $W^{1,q_1}(\O,\mathbb{R}^3)$ as $k\rightarrow\infty$.}$$
By Morrey embedding, we have
$$\text{$\H_k\rightarrow\H$ in $L^\infty(\O,\mathbb{R}^3)$ as $k\rightarrow\infty$.}$$
 Let $(\u,p,\B)$ be the unique weak solution of \eqref{MHD-wH} and let $(\u_k,p_k,\B_k)$ be the unique weak solution of \eqref{MHD-wH} with $(\w,\H)$ replaced by $(\w_k,\H_k)$. Denote $\mathbf{v}_k=\u_k-\u$, $\D_k=\B_k-\B$, $\pi_k=p_k-p$, then we have
\begin{equation}\label{eq-wkw}
\begin{cases}
-\Delta \mathbf{v}_k+(\w_k\cdot\nabla)\u_k-(\w\cdot\nabla)\u+\nabla \pi_k-(\curl\B_k\times\H_k-\curl\B\times\H)=\0\q &\text{in } \O,\\
\curl[\curl\D_k+\curl\B_k\times\H_k-\curl\B\times\H-(\u_k\times\H_k-\u\times\H)]=\0\q & \text{in } \O,\\
\divg \mathbf{v}_k=\divg\D_k=0 \q&\text{in } \O,\\
\mathbf{v}_k=\0,\q \D_k\times\nu=\0\q &\text{on } \p\O.
\end{cases}
\end{equation}
Multiply the first equation of \eqref{eq-wkw} by $\mathbf{v}_k$ and the second equation of \eqref{eq-wkw} by $\D_k$, then add them together, integrate by parts and use the identities
$$(\w_k\cdot\nabla)\u_k-(\w\cdot\nabla)\u=[(\w_k-\w)\cdot\nabla]\u_k+(\w\cdot\nabla)\mathbf{v}_k,$$
$$\curl\B_k\times\H_k-\curl\B\times\H=\curl\D_k\times\H_k+\curl\B\times(\H_k-\H),$$
$$\u_k\times\H_k-\u\times\H=\mathbf{v}_k\times\H_k+\u\times(\H_k-\H),$$
we obtain
\begin{equation}\label{continu-1-b}
\aligned
&\int_{\O}(|\nabla\bv_k|^2+|\curl\D_k|^2)dx=\int_{\O}\{\curl\B\times(\H_k-\H)\cdot\bv_k-[(\w_k-\w)\cdot\nabla]\u_k\cdot\bv_k\\
&+\u\times(\H_k-\H)\cdot\curl\D_k-\curl\B\times(\H_k-\H)\cdot\curl\D_k\}dx.
\endaligned
\end{equation}
By H\"{o}lder inequality and Poincar\'{e} inequality, we have
\begin{equation}\label{continu-2-b}
\aligned
&\int_{\O}\{\curl\B\times(\H_k-\H)\cdot\bv_k-[(\w_k-\w)\cdot\nabla]\u_k\cdot\bv_k\}dx\\
&\leq \|\curl\B\|_{L^2(\O)}\|\H_k-\H\|_{L^3(\O)}\|\bv_k\|_{L^6(\O)}+\|\w_k-\w\|_{L^3(\O)}\|\nabla\u_k\|_{L^2(\O)}\|\bv_k\|_{L^6(\O)}\\
&\leq C\left(\|\curl\B\|_{L^2(\O)}\|\H_k-\H\|_{L^3(\O)}+\|\w_k-\w\|_{L^3(\O)}\|\nabla\u_k\|_{L^2(\O)}\right)\|\nabla\bv_k\|_{L^2(\O)},
\endaligned
\end{equation}
and
\begin{equation}\label{continu-3-b}
\aligned
&\int_{\O}\{\u\times(\H_k-\H)\cdot\curl\D_k-\curl\B\times(\H_k-\H)\cdot\curl\D_k\}dx\\
&\leq\left(\|\u\|_{L^6(\O)}\|\H_k-\H\|_{L^3(\O)}+\|\curl\B\|_{L^2(\O)}
\left\|\H_k-\H\right\|_{L^\infty(\O)}\right)\|\curl\D_k\|_{L^2(\O)}\\
&\leq C\left(\|\nabla\u\|_{L^2(\O)}\|\H_k-\H\|_{L^3(\O)}+\|\curl\B\|_{L^2(\O)}
\left\|\H_k-\H\right\|_{L^\infty(\O)}\right)\|\curl\D_k\|_{L^2(\O)}.
\endaligned
\end{equation}
Combining \eqref{continu-1-b}, \eqref{continu-2-b}, \eqref{continu-3-b}, we obtain
$$
\aligned
\|\nabla\bv_k\|_{L^2(\O)}+\|\curl\D_k\|_{L^2(\O)}&\leq C\{(\|\nabla\u\|_{L^2(\O)}+\|\curl\B\|_{L^2(\O)})\|\H_k-\H\|_{L^3(\O)}+\\
&\|\curl\B\|_{L^2(\O)}\|\H_k-\H\|_{L^\infty(\O)}+\|\w_k-\w\|_{L^3(\O)}\|\nabla\u_k\|_{L^2(\O)}\}.
\endaligned
$$
Noting that
\begin{equation*}
\|\u\|_{H^1(\O)}+\|\B\|_{H^1(\O)}+\|\u_k\|_{H^1(\O)}+\|\B_k\|_{H^1(\O)}\leq C(\O)(\|\mathbf{f}\|_{H^{-1}(\O)}+\|\mathbf{g}\|_{L^2(\O)}),
\end{equation*}
it follows that
 $$\text{$\bv_k\rightarrow\0$ and $\D_k\rightarrow0$ in $H^1(\O,\mathbb{R}^3)$ as $k\rightarrow\infty$.}$$
In order to get
$$\text{$\D_k\rightarrow0$ in $W^{1,q_1}(\O,\mathbb{R}^3)$ as $k\rightarrow\infty$,}$$
 we rewrite the equations for $\D_k$:
\begin{equation*}
\begin{cases}
\curl(A(\H_k)\curl\D_k)=\curl[\bv_k\times\H_k+\u\times(\H_k-\H)-\curl\B\times(\H_k-\H)]\q & \text{in } \O,\\
\divg\D_k=0 \q&\text{in } \O,\\
\D_k\times\nu=\0\q &\text{on } \p\O.
\end{cases}
\end{equation*}
Applying Lemma \ref{lemma-3.1} to the above system and using \eqref{esti-B-W1q1}, we obtain
\begin{equation}\label{ineq-3.18}
\aligned
\|\D_k\|_{W^{1,q_1}(\O)}&\leq C\left(\|\bv_k\times\H_k+\u\times(\H_k-\H)-\curl\B\times(\H_k-\H)\|_{L^{q_1}(\O)}\right)\\
&\leq C\left(\kappa\|\mathbf{v}_k\|_{L^6(\O)}+(\|\u\|_{L^6(\O)}+\|\curl B\|_{L^{q_1}(\O)})\|\H_k-\H\|_{L^\infty(\O)}\right),
\endaligned
\end{equation}
where $C$ depends on $\O,q,\kappa$. Consequently, we complete the proof of continuity of $\mathrm{T}$.

Next we show that $\mathrm{T}$ is compact from $\mathcal{D}$ into $\mathcal{D}$. Assume that $(\w_k,\H_k)\in\mathcal{D}$. Then there exist $(\w,\H)\in\mathcal{D}$ and a subsequence of $\{(\w_k,\H_k)\}$, still denoted by $\{(\w_k,\H_k)\}$ to simplify the notation, satisfying
$$\w_k\rightharpoonup\w\text{ in } H^1(\O,\mathbb{R}^3) \text{ and }\H_k\rightharpoonup\H \text{ in } W^{1,q_1}(\O,\mathbb{R}^3)\text{ as } k\rightarrow\infty,$$
$$\w_k\rightarrow\w\text{ in } L^r(\O,\mathbb{R}^3)  \text{ for any } 1\leq r<6 \text{ and }\H_k\rightarrow\H \text{ in $L^\infty(\O,\mathbb{R}^3)$}\text{ as } k\rightarrow\infty.$$
Similarly to the proof of continuity of the operator $\mathrm{T}$, we obtain
$$\text{$\bv_k\rightarrow\0$ and $\D_k\rightarrow0$ in $H^1(\O,\mathbb{R}^3)$ as $k\rightarrow\infty$.}$$
Consequently, it follows that
$$\text{$\bv_k\rightarrow\0$ in $L^6(\O,\mathbb{R}^3)$ as $k\rightarrow\infty$.}$$
Therefore, from \eqref{ineq-3.18} we get
$$\text{$\D_k\rightarrow0$ in $W^{1,q_1}(\O,\mathbb{R}^3)$ as $k\rightarrow\infty$.}$$
{\it Step 4.} Finally, use Schauder's fixed point theorem and we conclude that $\mathrm{T}$ has a fixed point $(\u,\B)\in \mathcal{D}$. Then there exists a function $p\in L^2(\O)/\R$ such that $(\u,p,\B)$ is a weak solution of \eqref{MHD-wH} with $(\w,\H)$ replaced by $(\u,\B)$. So we get a weak solution of \eqref{Hall-MHD}.
\end{proof}

\section{Regularity of the steady Hall-MHD system}

In order to get $H^2$ regularity of \eqref{Hall-MHD}, we need the following regularity lemma for elliptic equation of divergence form.
\begin{Lem}\label{lemma-4.6}
Assume the matrix-valued function $M\in W^{1,3+\delta}(\O,\R^{3\times3})$ for some $\delta>0$ with uniform ellipticity condition
$$\lambda|\xi|^2\leq\langle M\xi, \xi\rangle\leq\Lambda|\xi|^2,\text{ $\forall\xi\in \mathbb{R}^3$,}$$
where $\lambda\leq\Lambda$ are two positive constants, and $\F\in H^1(\O,\mathbb{R}^3)$. Let $u\in H^1(\O)/\mathbb{R}$ solve the linear equation
\begin{equation}\label{eq-M}
\begin{cases}
\divg(M\nabla u)=\divg\F & \text{ in } \O,\\
(M\nabla u)\cdot\nu=\F\cdot\nu &\text{ on } \p\O.
\end{cases}
\end{equation}
Then $u\in H^2(\O)$ with the estimate
$$\|u\|_{H^2(\O)}\leq C\|\F\|_{H^1(\O)},$$
where $C$ depends on $\O$, $\lambda$, $\Lambda$, and the upper bound of $\|M\|_{W^{1,3+\delta}(\O)}$.
\end{Lem}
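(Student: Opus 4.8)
The plan is to establish the estimate by Nirenberg's tangential difference-quotient method, after a preliminary higher-integrability step for $\nabla u$; the delicate point will be controlling the lower-order term that appears upon differentiating the equation, and I expect that to be the main obstacle.

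First I would record the a priori bounds. Testing \eqref{eq-M} with the zero-mean representative of $u$ and using the ellipticity lower bound together with the Poincar\'e--Wirtinger inequality gives $\|u\|_{H^1(\O)}\le C(\O,\lambda)\|\F\|_{L^2(\O)}$. Since $M\in W^{1,3+\delta}(\O)\hookrightarrow C^{0,\beta}(\overline\O)$ with $\beta=\delta/(3+\delta)$, the coefficient matrix in \eqref{eq-M} is continuous, and since $\F\in H^1(\O)\hookrightarrow L^6(\O)$, the $L^p$ regularity theory for the Neumann problem with continuous coefficients (applied exactly as in the proof of Lemma \ref{lemma-3.1}) yields $\nabla u\in L^6(\O,\R^3)$ with
$$\|\nabla u\|_{L^6(\O)}\le C\big(\|\F\|_{L^6(\O)}+\|u\|_{H^1(\O)}\big)\le C\|\F\|_{H^1(\O)},$$
where $C$ depends on $\O,\lambda,\Lambda$ and on the upper bound of $\|M\|_{W^{1,3+\delta}(\O)}$.

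Set $\tau_h^k v(x)=h^{-1}\big(v(x+he_k)-v(x)\big)$. Working in the interior and localizing with a cutoff $\zeta$ supported in a ball compactly contained in $\O$, I would test the weak form $\int_\O M\nabla u\cdot\nabla\phi\,dx=\int_\O\F\cdot\nabla\phi\,dx$ with $\phi=-\tau_{-h}^k(\zeta^2\,\tau_h^k u)$, and use the discrete Leibniz rule $\tau_h^k(M\nabla u)=M(\cdot+he_k)\nabla\tau_h^k u+(\tau_h^k M)\nabla u$ together with the ellipticity lower bound to obtain
$$\lambda\int_\O\zeta^2|\nabla\tau_h^k u|^2\,dx\le C\|\F\|_{H^1(\O)}\,\|\zeta\nabla\tau_h^k u\|_{L^2(\O)}+\int_\O|\tau_h^k M|\,|\nabla u|\,|\zeta\nabla\tau_h^k u|\,dx+R_h,$$
where $R_h$ collects the commutator terms generated by $\nabla\zeta$, which (using the first step and Young's inequality) are absorbed into the left-hand side modulo a term bounded by $C\|\F\|_{H^1(\O)}^2$. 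The genuinely new term is the middle integral: $\|\tau_h^k M\|_{L^{3+\delta}}\le\|\nabla M\|_{L^{3+\delta}(\O)}$, $\nabla u\in L^6(\O)$ by the first step, and the H\"older-conjugate exponent $b$ defined by $\tfrac1{3+\delta}+\tfrac16+\tfrac1b=1$ satisfies $b<2$ \emph{precisely because} $\delta>0$, so that on the bounded domain $\O$
$$\int_\O|\tau_h^k M|\,|\nabla u|\,|\zeta\nabla\tau_h^k u|\,dx\le\|\nabla M\|_{L^{3+\delta}(\O)}\|\nabla u\|_{L^6(\O)}\|\zeta\nabla\tau_h^k u\|_{L^b(\O)}\le C\|\F\|_{H^1(\O)}\|\zeta\nabla\tau_h^k u\|_{L^2(\O)}.$$
Absorbing the resulting quadratic term and letting $h\to0$ bounds $D^2u$ in $L^2_{\loc}(\O)$ by $C\|\F\|_{H^1(\O)}$.

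For the boundary estimate I would flatten $\p\O$ near each boundary point by a $C^{1,1}$ diffeomorphism (so that the transformed coefficient matrix stays in $W^{1,3+\delta}$ and the transformed datum in $H^1$), take difference quotients only in the tangential directions, repeat the same absorption to bound all tangential second derivatives in $L^2$, and finally recover the purely normal second derivative algebraically from the equation, using that the normal--normal entry of the transformed $M$ is bounded below by a positive constant. Combined with the $H^1$ bound this gives $u\in H^2(\O)$ with $\|u\|_{H^2(\O)}\le C\|\F\|_{H^1(\O)}$. The main obstacle is exactly the term $\tau_h^k M\,\nabla u$ (equivalently $\nabla M\cdot\nabla u$, or $\divg((M-I)\nabla u)$ if one rewrites \eqref{eq-M} as a perturbed Laplace--Neumann problem): since $\nabla M$ lies only in $L^{3+\delta}$ it cannot be handled with only $\nabla u\in L^2$, and the naive reduction to the Neumann Laplacian fails for the same reason because $\divg((M-I)\nabla u)$ still contains $\nabla^2u$. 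What rescues the argument is the hypothesis $\delta>0$, which forces $b<2$, so that the $L^6$ bound on $\nabla u$ from the first step is just barely enough to absorb this term.
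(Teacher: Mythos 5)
Your argument is correct, but it follows a genuinely different route from the paper. You share the first step: both proofs use the continuity of $M$ (via $W^{1,3+\delta}(\O)\hookrightarrow C^{0,\delta/(3+\delta)}(\overline\O)$) and the $L^p$ theory for the Neumann problem to get $\nabla u\in L^6(\O)$, and both exploit the same numerology $\nabla M\in L^{3+\delta}$, $\nabla u\in L^6$. From there you run the Nirenberg difference-quotient method (interior estimate plus boundary flattening and tangential translations, recovering $\p_{nn}u$ from the equation), absorbing the problematic term $(\tau_h^k M)\nabla u$ by H\"older with exponents $(3+\delta,6,b)$. The paper instead rewrites the equation in non-divergence form, noting that $(m_{ij})_{x_i}u_{x_j}\in L^2(\O)$, invokes an off-the-shelf $H^2$ existence theorem (Troianiello, Theorem 3.29) for the nonvariational Neumann problem $-m_{ij}v_{x_ix_j}+\gamma v=-\divg\F+(m_{ij})_{x_i}u_{x_j}+\gamma u$, and then identifies $u=v$ by a De Giorgi-type level-set maximum principle for the difference $w$, which is the delicate point there because the drift coefficient $(m_{ij})_{x_i}$ is only in $L^{3+\delta}$. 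Your approach is more self-contained and yields the estimate with explicit dependence directly, at the cost of carrying out the (standard but only sketched) boundary-flattening details; the paper's approach avoids difference quotients entirely but must pay with the uniqueness argument for an equation with unbounded lower-order coefficients. One small remark: the absorption of the middle integral would formally go through even with $b=2$ (the product $\|\nabla M\|_{L^{3+\delta}}\|\nabla u\|_{L^6}$ is a fixed constant, so Young's inequality suffices); the place where $\delta>0$ is genuinely indispensable is the embedding $W^{1,3+\delta}\hookrightarrow C^0(\overline\O)$ that underlies your first step, so your closing emphasis is slightly misplaced though harmless.
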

\begin{proof}
Since $M\in W^{1,3+\delta}(\O,\R^{3\times3})\subseteq C^{0,\delta/(3+\delta)}(\overline{\O},\R^{3\times3})$, applying $L^p$-theory (see \cite[Theorem 3.16 (iv)]{Tro1987}) to \eqref{eq-M}, we obtain $u\in W^{1,6}(\O)$ with the estimate
$$\|u\|_{W^{1,6}(\O)}\leq C\|\F\|_{L^6(\O)}\leq C\|\F\|_{H^1(\O)},$$
where $C$ depends on $\O$, $\lambda$, $\Lambda$ and the upper bound of $\|M\|_{W^{1,3+\delta}(\O)}$.

We write $M=(m_{ij})$. Note that $(m_{ij})_{x_i}u_{x_j}\in L^2(\O)$. Let $\gamma$ be any positive constant. By \cite[Theorem 3.29]{Tro1987}, there exists a unique solution $v\in H^2(\O)$ solving the following equation
\begin{equation}
\begin{cases}
-m_{ij}v_{x_ix_j}+\gamma v=-\divg\F+(m_{ij})_{x_i}u_{x_j}+\gamma u& \text{ in } \O,\\
(M\nabla v)\cdot\nu=\F\cdot\nu &\text{ on } \p\O.
\end{cases}
\end{equation}
Set $w=u-v$, then $w\in W^{1,6}(\O)$ and $w$ satisfies
\begin{equation}\label{eq-Mw}
\begin{cases}
-\divg(M\nabla w)+(m_{ij})_{x_i}w_{x_j}+\gamma w=0 & \text{ in } \O,\\
(M\nabla w)\cdot\nu=0 &\text{ on } \p\O.
\end{cases}
\end{equation}
We claim that $w=0$. First, we show $\sup_\O w\leq 0$. If not, then $\sup_\O w>0$.
For any $0\leq k<\sup_\O w$, set $\varphi=(w-k)^+$. Then we have
$$
\aligned
\|\varphi\|_{H^1(\O)}^2&\leq C\int_\O(\lambda|\nabla\varphi|^2+\gamma \varphi^2)dx\\
&\leq C\int_\O \left[\langle M\nabla\varphi,\nabla\varphi\rangle+\gamma (\varphi^2+k\varphi)\right]dx=-C\int_\O(m_{ij})_{x_i}\varphi_{x_j}\varphi dx\\
&\leq C\|M\|_{W^{1,3+\delta}(\O)}\|\nabla\varphi\|_{L^2(\O)}\|\varphi\|_{L^6(\O)}\left|\{x\in\O:\;\nabla w(x)\neq \0,\;w(x)>k\}\right|^{\delta/(9+3\delta)},\\
&\leq C\|M\|_{W^{1,3+\delta}(\O)}\|\varphi\|_{H^1(\O)}^2|\{x\in\O:\;\nabla w(x)\neq \0,\;w(x)>k\}|^{\delta/(9+3\delta)}.
\endaligned
$$
Hence it implies that
$$\left|\left\{x\in\O:\;\nabla w(x)\neq \0,\;w(x)>k\right\}\right|\geq \left(\frac{1}{C\|M\|_{W^{1,3+\delta}(\O)}}\right)^{(9+3\delta)/\delta}.$$
Letting $k\to\sup_\O w$, we get
$$\left|\left\{x\in\O:\;\nabla w(x)\neq \0,\;w(x)=\sup_\O w\right\}\right|\geq \left(\frac{1}{C\|M\|_{W^{1,3+\delta}(\O)}}\right)^{(9+3\delta)/\delta}.$$
However, $\nabla w=\0$ in $\{x\in\O:\;w(x)=\sup_\O w\}$, which contradicts the above inequality. Therefore, $\sup_\O w\leq 0$. Since $-w$ is also a weak solution of \eqref{eq-Mw}, we have $\sup_\O (-w)\leq 0$. Thus $w=0$. Consequently, $u=v\in H^2(\O)$.
\end{proof}

Now we are ready to prove the regularity result.
\begin{proof}[Proof of Theorem \ref{Reg-Holder}]
From the second equation of \eqref{Hall-MHD} we have
$$\curl(\curl\B+\curl\B\times\B-\u\times\B-{\bf g})=\0\q\text{in }\O.
$$
Since $\O$ is simply connected, there exists $\varphi\in H^1(\O)/\R$ such that
$$
\curl\B+\curl\B\times\B-\u\times\B-{\bf g}=\nabla\varphi.
$$
Hence it follows that
$$\curl\B=A^{-1}(\B)(\nabla\varphi+\u\times\B+{\bf g}).
$$
Since $\B\times\nu=\0$ on $\p\O$, we obtain $\curl\B\cdot\nu=0$ on $\p\O$.
Combining this and the identity $\divg\curl\B=0$, we can verify that $\varphi$ satisfies the following Neumann problem
\begin{equation}\label{eq-faiuB}
\begin{cases}
\divg[A^{-1}(\B)(\nabla\varphi+\u\times\B+{\bf g})]=0\q & \text{in } \O,\\
[A^{-1}(\B)(\nabla\varphi+\u\times\B+{\bf g})]\cdot\nu=0\q&\text{on } \p\O.
\end{cases}
\end{equation}
Owing to the assumption $\B\in L^\infty(\O,\mathbb{R}^3)$, it holds that
$$
\frac{1}{1+\|\B\|_{L^\infty(\O)}^2}|\xi|^2\leq \langle A^{-1}(\B)\xi, \xi\rangle\leq |\xi|^2, \text{ $\forall\;\xi\in \mathbb{R}^3$.}
$$
Set $q_1=\min\{q,6\}$, then we have
$$\u\times\B+{\bf g}\in L^{q_1}(\O,\mathbb{R}^3).$$
By Lemma \ref{M-Campanato}, we see that
$$\u\times\B+{\bf g}\in L^{2,\mu}(\O,\mathbb{R}^3), \text{ where $0\leq \mu\leq 3-\frac{6}{q_1}$.}$$
Applying Lemma \ref{Camp-Neu} to the Neumann problem \eqref{eq-faiuB}, there exists $\delta\in(0,1)$ depending only on $\O$ and $\|\B\|_{L^\infty(\O)}$ such that
$$\nabla\varphi\in L^{2,\mu}(\O), \text{ where $0<\mu<\min\left\{1+2\delta,3-\frac{6}{q_1}\right\}$.}$$
Note that $\B$ satisfies the following $\divg$-$\curl$ system
\begin{equation}\label{div-curl-B}
\begin{cases}
\divg \B=0 \q&\text{in } \O,\\
\curl\B=A^{-1}(\B)(\nabla\varphi+\u\times\B+{\bf g})\q & \text{in } \O,\\
\B\times\nu=\0\q&\text{on } \p\O.
\end{cases}
\end{equation}
Then it follows from Lemma \ref{Lem2.1} that $\nabla\B\in L^{2,\mu}(\O,\mathbb{R}^{3\times3})$.
Consequently, Lemma \ref{M-Campanato} implies that $\B\in L^{2,\mu+2}(\O,\mathbb{R}^3)$. Choosing a constant $\mu$ satisfying
$$1<\mu<\min\left\{1+2\delta,3-\frac{6}{q_1}\right\}$$
and using Lemma \ref{M-Campanato} again, we get
$$\B\in C^{0,\beta}(\overline{\O},\mathbb{R}^3),\text{ where $\beta=\frac{\mu-1}{2}$.}$$
Hence $A(\B)\in C^{0,\beta}(\overline{\O},\mathbb{R}^{3\times3})$. Applying Lemma \ref{lemma-3.1} to the following system
\begin{equation}\label{eq-B}
\begin{cases}
\curl[A(\B)\curl\B]=\curl(\u\times\B+{\bf g})\q & \text{in } \O,\\
\divg\B=0 \q&\text{in } \O,\\
\B\times\nu=\0\q & \text{on }\p\O,
\end{cases}
\end{equation}
we obtain $\B\in W^{1,q_1}(\O,\mathbb{R}^3)$. Using Morrey embedding, it follows that
$$\B\in C^{0,\alpha}(\overline{\O},\mathbb{R}^3),\text{ where $\alpha=1-\frac{3}{q_1}$.}$$

Next, we assume that $\p\O$ is of class $C^{2,1}$ and $(\mathbf{f},\mathbf{g})\in L^{2}(\O,\mathbb{R}^3)\times H^1(\O,\mathbb{R}^3)$.
Noting that $\u$ satisfies
\begin{equation*}
\begin{cases}
-\Delta \u+(\u\cdot\nabla)\u+\nabla p=\curl\B\times\B+{\bf f}\q &\text{in } \O,\\
\divg \u=0 \q&\text{in } \O,\\
\u=\0 \q & \text{on }\p\O,
\end{cases}
\end{equation*}
and using regularity theory for the steady Navier-Stokes equations (see \cite[Theorem V.3.2]{BF2013} or \cite[Theorem IX.5.2]{Galdi}), we have $\u\in H^2(\O,\mathbb{R}^3)$.

Since $\B$ is H\"{o}lder continuous on $\overline{\O}$, we see that $A^{-1}(\B)$ is also H\"{o}lder continuous on $\overline{\O}$.
With $\u\times\B+\mathbf{g}\in L^6(\O,\mathbb{R}^3)$ in hand, we can apply \cite[Theorem 3.16 (iv)]{Tro1987} to \eqref{eq-faiuB},
and then conclude that $\varphi\in W^{1,6}(\O)$. Applying $L^p$ regularity theory for $\divg$-$\curl$ system (see Lemma \ref{Lem2.3}) to \eqref{div-curl-B}, we derive $\B\in W^{1,6}(\O,\mathbb{R}^3)$. Then it follows that
$$A^{-1}(\B)(\u\times\B+{\bf g})\in H^1(\O,\mathbb{R}^3).$$
Applying Lemma \ref{lemma-4.6} to \eqref{eq-faiuB}, we obtain $\varphi\in H^2(\O)$.
Since
$$A^{-1}(\B)(\nabla\varphi+\u\times\B+{\bf g})\in H^1(\O,\mathbb{R}^3),$$
applying Lemma \ref{Lem2.3} to \eqref{div-curl-B} we get $\B\in H^2(\O,\mathbb{R}^3)$.

\end{proof}

\section{Uniqueness under small external forces}

\begin{proof}[Proof of Theorem \ref{uniqueness}]
  From Theorem~\ref{existence} we see that for any given positive constant~$\kappa$, there exists a constant~$\eta=\eta(\O,q,\kappa)$ such that the Hall-MHD system~\eqref{Hall-MHD} has a weak solution $(\u_1,p_1,\B_1)\in H_0^1(\div0,\O)\times (L^2(\O)/\R)\times W_{t0}^{1,q_1}(\div0,\O)$ with the estimate
  \begin{equation}\label{eq5-0}
    \|\B_1\|_{W^{1,q_1}(\O)}\leq \kappa,
  \end{equation}
  under the assumption
  $$\|\f\|_{H^{-1}(\O)}+\|\mathbf{g}\|_{L^q(\O)}\leq \eta.$$
  Moreover, we have
 \begin{equation}\label{eq5.0}
 \|\u_1\|_{H^1(\O)}+\|\B_1\|_{H^1(\O)}\leq C_1(\O,q)(\|\f\|_{H^{-1}(\O)}+\|\mathbf{g}\|_{L^q(\O)}).
 \end{equation}

  We shall prove the uniqueness by choosing suitable ~$\kappa$ and $\epsilon=\epsilon(\O,q)$. Let $(\u_2,p_2,\B_2)$ be any other possible weak solution of \eqref{Hall-MHD} with $\B_2\in L^\infty(\O, \mathbb{R}^3)$. Set $\u=\u_2-\u_1$, $p=p_2-p_1$, $\B=\B_2-\B_1$. Then $(\u,p,\B)$ satisfies the following system:
\begin{equation}
\begin{cases}
  -\Delta \u+(\u_2\cdot\d)\u+(\u\cdot\d)\u_1+\d p-\curl\B\times\B_2-\curl\B_1\times\B=\0& \text{in }\O,\\
  \curl(\curl\B+\curl\B\times\B_2+\curl\B_1\times\B-\u\times\B_2-\u_1\times\B)=\0&\text{in }\O,\\
  \div\u=\div\B=0&\text{in }\O,\\
  \u=\0,\q\B\times\nu=\0&\text{on }\p\O.
\end{cases}
\end{equation}
Taking $(\u,\B)$ as a test function pair of the above system and integrating by parts, we then get
  \begin{equation}\label{eq5.3}
    \int_{\O}|\d\u|^2\,dx=-\int_{\O}(\u\cdot\d)\u_1\cdot\u\,dx+\int_\O(\curl\B\times\B_2+ \curl\B_1\times\B)\cdot\u\,dx,
  \end{equation}
  \begin{equation}\label{eq5.4}
    \int_\O|\curl\B|^2\,dx=-\int_\O\curl\B_1\times\B\cdot\curl\B\,dx+\int_\O(\u\times\B_2\cdot\curl\B+\u_1\times \B\cdot\curl\B)\,dx.
  \end{equation}
  Combining ~\eqref{eq5.3} and ~\eqref{eq5.4}, we obtain
\begin{equation}\label{eq5.5}
\aligned
    &\int_\O(|\d \u|^2+|\curl\B|^2)\,dx\\
    =& \int_\O\big[\curl\B_1\times\B\cdot\u+\u_1\times\B\cdot\curl\B-(\u\cdot\d)\u_1\cdot\u-\curl\B_1\times\B\cdot\curl\B\big]\,dx \\
    \leq &  \|\curl\B_1\|_{L^{3/2}(\O)}\|\u\|_{L^6(\O)}\|\B\|_{L^6(\O)} +\|\u_1\|_{L^3(\O)}\|\B\|_{L^6(\O)}\|\curl\B\|_{L^2(\O)} \\
    &+\|\d\u_1\|_{L^2(\O)}\|\u\|_{L^3(\O)}\|\u\|_{L^6(\O)}+\|\curl\B_1\|_{L^3(\O)}\|\B\|_{L^6(\O)}\|\curl\B\|_{L^2(\O)}\\
    \leq& C(\O,q)\left(\|\d\u_1\|_{L^2(\O)}+\|\curl\B_1\|_{L^q(\O)}\right)\left(\|\d\u\|_{L^2(\O)}^2+\|\curl\B\|_{L^2(\O)}^2\right)\\
     \leq& C_2(\O,q)\left(\|\u_1\|_{H^1(\O)}+\|\B_1\|_{W^{1,q}(\O)}\right)\left(\|\d\u\|_{L^2(\O)}^2+\|\curl\B\|_{L^2(\O)}^2\right).
\endaligned
\end{equation}
 Let
  $$
  \kappa=\frac{1}{4C_2(\O,q)},\;\epsilon=\epsilon(\O,q)=\min\left\{\frac{1}{4C_1(\O,q)C_2(\O,q)},\eta(\O,q,\kappa)\right\},
  $$
  where $C_1(\O,q)$ is the constant in \eqref{eq5.0} and $C_2(\O,q)$ is the constant in \eqref{eq5.5}. Assume that
  $$\|\f\|_{H^{-1}(\O)}+\|\mathbf{g}\|_{L^q(\O)}\leq\epsilon.$$
   Combining \eqref{eq5-0}, \eqref{eq5.0} and \eqref{eq5.5}, we get
  $$
\|\d\u\|_{L^2(\O)}^2+\|\curl\B\|_{L^2(\O)}^2\leq \frac{1}{2}\left(\|\d\u\|_{L^2(\O)}^2+\|\curl\B\|_{L^2(\O)}^2\right),
  $$
  which immediately implies that $\u=\B=\0$ in $\O$. We are done.
\end{proof}

\subsection*{Acknowledgements.}
This is a preprint of an article published in Calculus of Variations and Partial Differential Equations. The final authenticated version is available online at: https://doi.org/10.1007/s00526-020-01745-1.
The authors are grateful to their supervisor, Prof. Xingbin Pan,
for guidance and constant encouragement. The referee is thanked for valuable comments and suggestions that have improved the manuscript. This work was partially supported by the National
Natural Science Foundation of China Grant Nos. 11671143 and 11901003.
Zeng was partially supported by the Natural Science Foundation of Chongqing Grant No. cstc2019jcyj-msxmX0214, the Science and Technology Research Program of Chongqing Municipal Education Commission Grant No. KJQN201800841, the Research Program of CTBU Grant No. 1952042 and the Program for the Introduction of High-Level Talents of CTBU Grant No. 1856013.
Zhang was also supported by Anhui Provincial Natural Science Foundation Grant No. 1908085QA28.

 \vspace {0.1cm}

\begin {thebibliography}{DUMA}

\bibitem{ADFL} M. Acheritogaray, P. Degond, A. Frouvelle and J. G. Liu, {\it Kinetic formulation and global existence for the Hall-Magneto-hydrodynamics system}, Kinet. Relat. Models {\bf 4} (4) (2011), 901-918.

\bibitem{Al2018} G. S. Alberti, {\it H\"{o}lder regularity for Maxwell's equations under minimal assumptions on the coefficients}, Calc. Var. Partial Differential Equations {\bf 57} (3) (2018),  Art. 71, 11 pp.

\bibitem{AS2011a} C. Amrouche and N. Seloula, {\it $L^p$-theory for vector potentials and Sobolev's inequalities for vector fields}, C. R. Math. Acad. Sci. Paris {\bf 349} (9-10) (2011), 529-534.

\bibitem{AS2013} C. Amrouche and N. Seloula, {\it $L^p$-theory for vector potentials and Sobolev's inequalities for vector fields: application to the Stokes equations with pressure boundary conditions}, Math. Models Methods Appl. Sci. {\bf 23} (1) (2013), 37-92.

\bibitem{BP2007} P. W. Bates and X. B. Pan, {\it Nucleation of instability of the Meissner state of 3-dimensional superconductors}, Comm. Math. Phys. {\bf 276} (3) (2007), 571-610; Erratum, {\bf 283} (3) (2008), 861.

\bibitem{BF2013} F. Boyer and P. Fabrie, {\it Mathematical tools for the study of the incompressible Navier-Stokes equations and related models}, Applied Mathematical Sciences, {\bf 183}, Springer, New York, 2013.

\bibitem{Chae-Degond-Liu}
 D. Chae, P. Degond and  J. G. Liu,
 {\it Well-posedness for Hall-magnetohydrodynamics},
  Ann. Inst. Henri Poincare-Anal. Nonlineaire {\bf 31} (3) (2014), 555-565.

\bibitem{Chae-Lee}
 D. Chae and J. Lee,
  {\it On the blow-up criterion and small data global existence for the Hall-magnetohydrodynamics}, J. Diff. Equations {\bf 256} (11) (2014), 3835-3858.

 \bibitem{Chae-Schonbek}
 D. Chae and M. Schonbek,
 {\it  On the temporal decay for the Hall-magnetohydrodynamic equations},
  J. Diff. Equations {\bf 255} (11) (2013), 3971-3982.

\bibitem{Chae-2015}
D. Chae and J. Wolf,
{\it On partial regularity for the steady Hall magnetohydrodynamics system},
Commun. Math. Phys. {\bf 339} (3) (2015), 1147-1166.

\bibitem{Chae-2016}
D. Chae and J. Wolf,
{\it On partial regularity for the 3D nonstationary Hall magnetohydrodynamics equations on the plane},
 SIAM J. Math. Anal. {\bf 48} (1) (2016), 443-469.

\bibitem{Chu-E}
 A. S. Chuvatin and B. Etlicher,
{\it Experimental observation of a wedge-shaped density shock in a plasma opening switch},
Phys. Rev. Lett. {\bf 74} (15) (1995), 2965-2968.

\bibitem{Dai}
M. Dai,
{\it Regularity criterion for the 3D Hall-magneto-hydrodynamics},
J. Differential Equations {\bf 261} (1) (2016), 573-591.

\bibitem{Galdi}
G. P. Galdi, {\it An introduction to the mathematical theory of the Navier-Stokes equations.
Steady-state problems}, Second edition, Springer Monographs in Mathematics, Springer, New York, 2011.

\bibitem{Homann}
H. Homann and R. Grauer,
{\it Bifurcation analysis of magnetic reconnection in Hall-MHD systems},
Physica D {\bf 208} (1-2) (2005), 59-72.

\bibitem{Huba-2}
 J. D. Huba, J. M. Grossmann and P. F. Ottinger,
  {\it Hall magnetohydrodynamic modeling of a long-conduction-time plasma opening switch},
   Phys. Plasmas {\bf 1} (10) (1994), 3444-3454.

\bibitem{Huba-1}
J. D. Huba, J. G. Lyon and A.B. Hassam,
 {\it Theory and simulation of the Rayleigh-Taylor instability in the limit of large Larmor radius},
  Phys. Rev. Lett. {\bf 59} (26) (1987), 2971-2974.

\bibitem{Lighthill}
M. J. Lighthill,
{\it Studies on magneto-hydrodynamic waves and other anisotropic wave motions},
 Philos. Trans. R. Soc. Lond., Ser. A {\bf 252} (1014) (1960), 397-430.

\bibitem{Mininni}
P. D. Mininni, A. Alexakis and A. Pouquet,
{\it Energy transfer in Hall-MHD turbulence: Cascades, backscatter and dynamo action},
J. Plasma Phys. {\bf 73} (3) (2007), 377-401.

\bibitem{Ohsaki}
S. Ohsaki,
{\it Hall effect on relaxation process of flowing plasmas},
Phys. Plasmas {\bf 12} (3) (2005), art. no. 032306.

\bibitem{Pi1984} R. Picard, {\it An elementary proof for a compact imbedding result in generalized electromagnetic theory}, Math. Z. {\bf 187} (2) (1984), 151-164.

\bibitem{Ripin}
B. H. Ripin, J. D. Huba, E. A. McLean, C. K. Manka, T. Peyser, H. R. Burris and J. Grun,
{\it Sub-Alfv{\'e}nic plasma expansion},
Phys. Fluids B {\bf 5} (10) (1993), 3491-3506.

\bibitem{Tro1987} G. M. Troianiello, {\it Elliptic Differential Equations and Obstacle Problems}, The University Series in Mathematics, Plenum Press, New York, 1987.

\bibitem{Wan-Zhou}
R. Wan and Y. Zhou,
{\it On global existence, energy decay and blow-up criteria for the Hall-MHD system},
J. Differential Equations {\bf 259} (11) (2015), 5982-6008.

\bibitem{Weng-2016-1}
S. Weng, {\it On analyticity and temporal decay rates of solutions to the viscous resistive Hall-MHD system},
J. Diff. Equations {\bf 260} (8) (2016), 6504-6524.

\bibitem{Weng-2016-2}
S. Weng,
{\it Space-time decay estimates for the incompressible viscous resistive MHD and Hall-MHD equations},
J. Funct. Anal. {\bf 270} (6) (2016), 2168-2187.

\bibitem{Yin2004} H. M. Yin, {\it Regularity of weak solution to Maxwell's equations and applications to microwave heating}, J. Differential Equations {\bf 200} (1) (2004), 137-161.

\bibitem{Zeng2017} Y. Zeng, {\it Steady states of Hall-MHD system}, J. Math. Anal. Appl. {\bf 451} (2) (2017), 757-793.

\end{thebibliography}

\end{document}